\documentclass[11pt,oneside,reqno]{article}

\usepackage[a4paper,width=16cm,height=24cm]{geometry}

\usepackage{amsmath,amsthm,amssymb}
\usepackage{bbm}
\usepackage{mathrsfs}

\usepackage[authoryear,longnamesfirst]{natbib}
\usepackage{array}

\usepackage{filemod}
\usepackage[breaklinks=true,hidelinks]{hyperref}

\usepackage{etoolbox} 

\makeatletter

\def\@noindentfalse{\global\let\if@noindent\iffalse}
\def\@noindenttrue {\global\let\if@noindent\iftrue}
\def\@aftertheorem{%
  \@noindenttrue
  \everypar{%
    \if@noindent%
      \@noindentfalse\clubpenalty\@M\setbox\z@\lastbox%
    \else%
      \clubpenalty \@clubpenalty\everypar{}%
    \fi}}

\theoremstyle{plain}
\newtheorem{theorem}{Theorem}[section]
\AfterEndEnvironment{theorem}{\@aftertheorem}

\AfterEndEnvironment{proposition}{\@aftertheorem}
\newtheorem{lemma}[theorem]{Lemma}
\AfterEndEnvironment{lemma}{\@aftertheorem}
\newtheorem{corollary}[theorem]{Corollary}
\AfterEndEnvironment{corollary}{\@aftertheorem}

\theoremstyle{definition}
\newtheorem{remark}[theorem]{Remark}
\AfterEndEnvironment{remark}{\@aftertheorem}

%
%
%
%
%

\def\be#1{\begin{equation*}#1\end{equation*}}
\def\ben#1{\begin{equation}#1\end{equation}}
\def\bes#1{\begin{equation*}\begin{split}#1\end{split}\end{equation*}}
\def\besn#1{\begin{equation}\begin{split}#1\end{split}\end{equation}}

\def\ba#1{\begin{align*}#1\end{align*}}
\def\ban#1{\begin{align}#1\end{align}}

\def\given{\typeout{Command 'given' should only be used within bracket command}}
\newcounter{@bracketlevel}
\def\@bracketfactory#1#2#3#4#5#6{
\expandafter\def\csname#1\endcsname##1{%
\addtocounter{@bracketlevel}{1}%
\global\expandafter\let\csname @middummy\alph{@bracketlevel}\endcsname\given%
\global\def\given{\mskip#5\csname#4\endcsname\vert\mskip#6}\csname#4l\endcsname#2##1\csname#4r\endcsname#3%
\global\expandafter\let\expandafter\given\csname @middummy\alph{@bracketlevel}\endcsname
\addtocounter{@bracketlevel}{-1}}%
}
\def\bracketfactory#1#2#3{%
\@bracketfactory{#1}{#2}{#3}{relax}{1mu plus 0.25mu minus 0.25mu}{0.6mu plus 0.15mu minus 0.15mu}
\@bracketfactory{b#1}{#2}{#3}{big}{1mu plus 0.25mu minus 0.25mu}{0.6mu plus 0.15mu minus 0.15mu}
\@bracketfactory{bb#1}{#2}{#3}{Big}{2.4mu plus 0.8mu minus 0.8mu}{1.8mu plus 0.6mu minus 0.6mu}
\@bracketfactory{bbb#1}{#2}{#3}{bigg}{3.2mu plus 1mu minus 1mu}{2.4mu plus 0.75mu minus 0.75mu}
\@bracketfactory{bbbb#1}{#2}{#3}{Bigg}{4mu plus 1mu minus 1mu}{3mu plus 0.75mu minus 0.75mu}
}
\bracketfactory{clc}{\lbrace}{\rbrace}
\bracketfactory{clr}{(}{)}
\bracketfactory{cls}{[}{]}
\bracketfactory{abs}{\lvert}{\rvert}
\bracketfactory{norm}{\Vert}{\Vert}
\bracketfactory{floor}{\lfloor}{\rfloor}
\bracketfactory{ceil}{\lceil}{\rceil}
\bracketfactory{angle}{\langle}{\rangle}

\newcounter{ctr}\loop\stepcounter{ctr}\edef\X{\@Alph\c@ctr}%
	\expandafter\edef\csname s\X\endcsname{\noexpand\mathscr{\X}}
	\expandafter\edef\csname c\X\endcsname{\noexpand\mathcal{\X}}
	\expandafter\edef\csname b\X\endcsname{\noexpand\boldsymbol{\X}}
	\expandafter\edef\csname I\X\endcsname{\noexpand\mathbbm{\X}}
	\expandafter\edef\csname r\X\endcsname{\noexpand\mathrm{\X}}
\ifnum\thectr<26\repeat

\let\@IE\IE\let\IE\undefined
\newcommand{\IE}{\mathop{{}\@IE}\mathopen{}}
\let\@IP\IP\let\IP\undefined
\newcommand{\IP}{\mathop{{}\@IP}}

\newcount\minute
\newcount\hour
\newcount\hourMins
\def\now{%
\minute=\time%
\hour=\time \divide \hour by 60%
\hourMins=\hour \multiply\hourMins by 60%
\advance\minute by -\hourMins%
\zeroPadTwo{\the\hour}:\zeroPadTwo{\the\minute}%
}
\def\zeroPadTwo#1{\ifnum #1<10 0\fi#1}

\numberwithin{equation}{section}
\allowdisplaybreaks[4]

\renewcommand\section{\@startsection {section}{1}{\z@}%
{-3.5ex \@plus -1ex \@minus -.2ex}%
{1.3ex \@plus.2ex}%
{\center\small\sc\mathversion{bold}\MakeUppercase}}

\def\subsection#1{\@startsection {subsection}{2}{0pt}%
{-3.5ex \@plus -1ex \@minus -.2ex}%
{1ex \@plus.2ex}%
{\bf\mathversion{bold}}{#1}}

\def\subsubsection#1{\@startsection{subsubsection}{3}{0pt}%
{\medskipamount}%
{-10pt}%
{\normalsize\itshape}{\kern-2.2ex. #1.}}

\def\blfootnote{\xdef\@thefnmark{}\@footnotetext}

\makeatother

\renewcommand{\cite}{\citet}

\def\^#1{\ifmmode {\mathaccent"705E #1} \else {\accent94 #1} \fi}
\def\~#1{\ifmmode {\mathaccent"707E #1} \else {\accent"7E #1} \fi}

\edef\-#1{\noexpand\ifmmode {\noexpand\bar{#1}} \noexpand\else \-#1\noexpand\fi}
\def\>#1{\vec{#1}}
\def\.#1{\dot{#1}}

\def\wt#1{\widetilde{#1}}
\def\atop{\@@atop}

\renewcommand{\leq}{\leqslant}
\renewcommand{\geq}{\geqslant}
\renewcommand{\phi}{\varphi}
\newcommand{\eps}{\varepsilon}

\newcommand{\eq}{\eqref}

\newcommand{\dw}{\mathop{d_{\mathrm{W}}}}

\newcommand{\bigo}{\mathrm{O}}
\newcommand{\lito}{\mathrm{o}}

\newcommand{\toinf}{\to\infty}

\newcommand{\Var}{\mathop{\mathrm{Var}}\nolimits}
\newcommand{\law}{\mathscr{L}}

\newcommand{\SL}{\mathrm{SL}}
\newcommand{\Exp}{\mathrm{Exp}}
\newcommand{\Ber}{\mathrm{Ber}}
\newcommand{\Geo}{\mathrm{Geo}}

\newcommand{\bu}{\mathbf{u}}

\newcommand{\bz}{\mathbf{z}}
\def\sp#1{^{(#1)}}

\newcommand{\Cov}{\mathrm{Cov}}

\newcommand{\done}{d_{\mathrm{W}}}
\def\Ind#1{\rI[#1]}

\def\spm{\sp}

\begin{document}

\title{\sc\bf\large\MakeUppercase{
Exponential and Laplace approximation for occupation statistics of branching random walk
}
}
\author{\sc Erol Pek\"oz, Adrian R\"ollin
and Nathan Ross}
\date{\it Boston University, National University of Singapore
and University~of~Melbourne
}

\maketitle

\begin{abstract}
We study occupancy counts for the critical nearest-neighbor branching random walk on the $d$-dimensional lattice, conditioned on non-extinction. For $d\geq 3$,  \cite{Lalley2011} showed that the properly scaled joint distribution of the number of sites occupied by $j$~generation-$n$ particles, $j=1,2,\ldots$, converges in distribution as $n$ goes to infinity, to a deterministic multiple of a single exponential random variable. The limiting exponential variable can be understood as the classical Yaglom limit of the total population size of generation~$n$. Here we study the second order fluctuations around this limit, first, by providing a rate of convergence in the Wasserstein metric that holds for all $d\geq3$, and second, by showing that for $d\geq 7$, the weak limit of the scaled joint differences between the number of occupancy-$j$ sites and appropriate multiples of the total population size converge in the Wasserstein metric to a multivariate symmetric Laplace distribution. We also provide a rate of convergence for this latter result. 
\end{abstract}

\noindent\textbf{Keywords:} Branching random walk; distributional approximation; 
exponential distribution; multivariate symmetric Laplace distribution

\section{Introduction}
Branching random walk (BRW) is a fundamental mathematical model of a population evolving in time and space, which has been intensely studied for more than 50 years due to its connection to population genetics and superprocesses; see, e.g., \cite[Chapter~9]{Dawson2017} and references. Among this literature, the most relevant to our study is \cite[Theorem~5]{Lalley2011}, which states that the exponential distribution arises asymptotically for certain occupation statistics of a critical BRW conditioned on non-extinction. 
Their result is closely related to the classical theorem of \cite{Yaglom1947}, which says that the distribution of the size of a critical Galton-Watson process, properly scaled and conditioned on non-extinction, converges to the exponential distribution. Yaglom's theorem has a large related literature of embellishments and extensions, e.g., \cite{Lyons1995} and \cite{Geiger2000}, give elegant probabilistic proofs, and \cite{Pekoz2011a} give a rate of convergence using Stein's method.

We now define the nearest neighbor critical BRW on the $d$-dimensional integer lattice. At each time step $n=1,2,  \ldots$, every particle generates an independent number of offspring having distribution $X$ with $\IE[X]=1$, $\Var(X)=\sigma^2<\infty$, and each offspring moves to a site randomly chosen from the $2d+1$ sites having distance less than or equal to $1$ from the site of its parent.
We say that a site has \emph{multiplicity $j$ in the $n$th generation} if there are exactly $j$ particles from the $n$th generation at that site.  Starting the process from a single particle at the origin, let $Z_n$ be the number of particles in $n$th generation, and let $M_n(j)$ be the total number of multiplicity $j$ sites in the $n$th generation. 
\cite[Theorem 5]{Lalley2011},
showed that, when $d\geq 3$, there are constants $\kappa_1, \kappa_2 \ldots$ with $\sum_{j\geq1} j\kappa_j=1$ such that, as $n\to\infty$,
\ben{\label{e1}
\law\left(\frac{Z_n}{n}, \frac{M_n(1)}{n}, \frac{M_n(2)}{n},\ldots\Big|Z_n>0\right)\rightarrow \law\bclr{(1,\kappa_1,\kappa_2,\ldots) Z},
}
 where $Z\sim\Exp(\sigma^2/2)$ is exponential rate $\sigma^2/2$,
and the convergence is with respect to the product topology (which is the same as convergence of finite dimensional distributions).  We study the second order fluctuations in this limit, working with the finite dimensional distributions of~\eq{e1} in the $L_1$-Wasserstein metric. More precisely, let $\norm{\cdot}_1$ be the $L_1$-norm on $\IR^r$, let 
\be{
\cH_r=\{h:\IR^r\rightarrow\IR : \text{$\abs{h(x)-h(y)}\leq  \norm{x-y}_1$ for every $x,y\in\IR^r$} \}
}
be the set of $L_1$-Lipschitz continuous functions with constant $1$, and define 
\ben{\label{eq:wassdef}
\dw\clr{\law(W), \law(V)}:=\sup_{h\in \cH_r}|\IE[h(W)-h(V)]|.
}
Our first main result is as follows. Below and throughout the paper, we use~$c$ to represent constants that do not depend on $n$,
 but possibly $\law(X)$, the dimension $d$, and the length of the vector~$r$, and can differ from line to line. We also disregard the pathological case where $\Var(X)=0$.
\begin{theorem}\label{t1}
With the definitions above, and any $r\geq 1$, $d\geq 3$, and offspring variable~$X$ satisfying $\IE[X]=1, \IE[X^3]<\infty, \Var(X)=\sigma^2$, there are positive constants $\kappa_1,\ldots, \kappa_r$, so that for 
$Z\sim\Exp(\sigma^2/2)$,
\be{
\done\left(\law\left(\frac{Z_n}{n}, \frac{M_n(1)}{n},\ldots , \frac{M_n(r)}{n}\Big|Z_n>0\right)
, \law\bbclr{(1, \kappa_1,\ldots , \kappa_r)Z}\right) \leq c n^{-\frac{d-2}{2(d+1)}},
}
for some constant $c$ independent of $n$.
\end{theorem}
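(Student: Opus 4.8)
The plan is to compare the random vector $W_n := (Z_n/n, M_n(1)/n, \ldots, M_n(r)/n)$ with the single random variable $Z$ scaled by the deterministic vector $(1,\kappa_1,\ldots,\kappa_r)$, splitting the discrepancy into a one-dimensional Yaglom estimate for $Z_n/n$ and a concentration estimate asserting that each $M_n(j)$ is close to $\kappa_j Z_n$. For the reduction, observe that for every $h\in\cH_r$ the map $t\mapsto h\bclr{(1,\kappa_1,\ldots,\kappa_r)t}$ is Lipschitz on $\IR$ with constant $1+\sum_{j\le r}\kappa_j\le 1+\sum_{j\ge1}j\kappa_j=2$; inserting the intermediate vector $(1,\kappa_1,\ldots,\kappa_r)(Z_n/n)$ and using the triangle inequality for $\dw$ gives
\bm{
\dw\bclr{\law(W_n\mid Z_n>0),\law\bbclr{(1,\kappa_1,\ldots,\kappa_r)Z}}\\
\le\;\frac1n\sum_{j=1}^r\IE\bcls{\abs{M_n(j)-\kappa_j Z_n}\,\big|\,Z_n>0}\;+\;2\,\dw\bclr{\law(Z_n/n\mid Z_n>0),\Exp(\sigma^2/2)}.
}
The last term is controlled by the Stein's-method rate of convergence for Yaglom's theorem in the spirit of \cite{Pekoz2011a}: under $\IE[X^3]<\infty$ it is $\bigo(n^{-1}\log n)$, hence $\lito\bclr{n^{-(d-2)/(2(d+1))}}$ for every $d\ge3$; I would only need to re-derive it in the precise normalization required here. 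Everything therefore reduces to the $L_1$-estimate
\be{
\IE\bcls{\abs{M_n(j)-\kappa_j Z_n}\,\big|\,Z_n>0}\;\le\; c\,n^{1-\frac{d-2}{2(d+1)}},\qquad j=1,\ldots,r.
}

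To prove this I would condition on the genealogical tree $\cT_n$ of the first $n$ generations — fixing the ancestry and sibling relations, and in particular $Z_n$, but not the displacements. Given $\cT_n$, the generation-$n$ positions are independent random walks read off along ancestral lines, so for two particles $u,v$ whose most recent common ancestor is in generation $n-k$ the difference $\bY_u-\bY_v$ has the law of a $2k$-step walk (the step law being symmetric), whence $\IP(\bY_u=\bY_v\mid\cT_n)=p_{2k}$, where $p_m:=\IP(S_m=\bzero)\asymp m^{-d/2}$ for $d\ge3$ by the local CLT; the conditional law of the whole coincidence pattern of positions is governed by these kernels along $\cT_n$. The transience bound $\sum_m p_m<\infty$ for $d\ge3$ is exactly what makes $\kappa_j:=\lim_n\IE\bcls{M_n(j)/Z_n\mid Z_n>0}$ well-defined — this is the content of \cite[Theorem~5]{Lalley2011} — and I would take this as the definition of $\kappa_j$ (its positivity for each $j$ being part of that theorem), its value being readable from the size-biased (spine) decomposition of the conditioned tree. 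I would then bound $\abs{M_n(j)-\kappa_j Z_n}$ by the \emph{spatial} fluctuation $\abs{M_n(j)-\IE[M_n(j)\mid\cT_n]}$ plus the \emph{genealogical} fluctuation $\abs{\IE[M_n(j)\mid\cT_n]-\kappa_j Z_n}$. Given $\cT_n$ the occupancy field is locally sparse, so the spatial part has conditional variance of order $Z_n$ and hence $L_1$-norm $\bigo(\sqrt{Z_n})=\bigo(\sqrt n)$ on the survival event, which is $\lito\bclr{n^{1-(d-2)/(2(d+1))}}$ because $(d-2)/(2(d+1))<\tfrac12$.

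The genealogical fluctuation is the crux, and I expect it to be the main obstacle. I would estimate $\IE\bcls{\bclr{\IE[M_n(j)\mid\cT_n]-\kappa_j Z_n}^2\mid Z_n>0}$ by expanding it over pairs and quadruples of generation-$n$ particles into joint coincidence probabilities built from the $p_m$, weighted by the genealogical statistics of $\cT_n$ under the conditioned law, and then truncate the ancestral depths at a scale $s=s(n)$: coincidence groups with common ancestor more than $s$ generations back contribute a ``tail'' of order $n\sum_{k>s}p_{2k}\asymp n\,s^{-(d-2)/2}$, while the depth-$\le s$ part depends on $\cT_n$ only through its last $s$ generations and contributes a ``short-range'' term that grows polynomially in $s$; optimizing over $s$ yields $s\asymp n^{1/(d+1)}$ and the bound $c\,n^{1-(d-2)/(2(d+1))}$, after noting that the gap between the finite-$n$ occupancy intensity and $\kappa_j$ is only $\bigo(n^{-(d-2)/2})$ in relative size, which is again negligible for every $d$. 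The hard points are: (i) obtaining sharp enough control of the second-order (pair and quadruple) genealogical structure of a critical Galton--Watson tree conditioned on the Yaglom event — where $Z_n$ is itself merely close to, not equal to, a constant multiple of $n$; (ii) bounding the multi-particle — not merely pairwise — coincidence probabilities uniformly enough to carry the truncation through with the correct power of $s$; and (iii) assembling these pieces on the conditioned law and combining with the Yaglom rate of the first display so that the final constant $c$ is genuinely independent of $n$.
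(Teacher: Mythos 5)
Your reduction via the triangle inequality is valid, and the target estimate $\IE\bcls{\abs{M_n(j)-\kappa_j Z_n}\,\big|\,Z_n>0}\leq c\,n^{1-(d-2)/(2(d+1))}$ would indeed finish the proof. But that estimate is the entire content of the theorem, and your route to it has genuine gaps. First, the claim that the spatial fluctuation $M_n(j)-\IE[M_n(j)\mid\cT_n]$ has conditional variance $\bigo(Z_n)$ is asserted, not proved, and it is not a soft fact: the displacement randomness lives on the \emph{edges} of $\cT_n$, so perturbing one displacement near the root translates an entire subtree of generation-$n$ particles, and a bounded-differences argument gives only the trivial bound $\bigo(Z_n^2)$. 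Getting $\bigo(Z_n)$ already requires the same pairwise-coincidence/transience estimates you defer to the genealogical part. Second, for the genealogical part your numerology is inconsistent: you propose to bound the conditional \emph{second} moment by $c\,n^{1-(d-2)/(2(d+1))}=\lito(n)$, but Theorem~\ref{t3} (and Lemma~\ref{lem:14}) show that $(M_n(j)-\kappa_j Z_n)/\sqrt n$ has a nondegenerate limit given $Z_n>0$, so the conditional second moment of the full difference is of order $n$; a correct accounting needs the second moment to be $\bigo\bclr{n^{2-(d-2)/(d+1)}}$, i.e.\ $\bigo(n^{3/(d+1)})$ for the \emph{unconditional} variance of $M_n(j)-\mu_n(j)Z_n$ after dividing by $\IP(Z_n>0)$. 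Even the paper's own variance machinery (the telescoping in Lemma~\ref{lem:14}, whose one-step increment is controlled by $\IE[Z_nY_{n;1}]$) delivers only $\bigo(n^{2-d/3})$-type control and needs $d\geq7$; for $d=3,4$ it gives $\bigo(n)$ resp.\ $\bigo(n^{2/3})$, which misses the required $n^{3/4}$ resp.\ $n^{3/5}$. So the "hard points'' you list at the end are not loose ends to be tidied --- they are the theorem, and your sketch does not supply estimates strong enough to close them in low dimensions.

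The paper avoids this impasse by \emph{not} comparing $M_n(j)$ to $\kappa_j Z_n$ directly. It fixes an intermediate generation $m$ with $n-m=\floor{n^{3/(d+1)}}$ and compares $M_n(j)/n$ to $\kappa_j Z_m/m$. Conditioning on survival to time $n$ is realized by the size-biased spine construction (so that $\law(Z_m\mid Z_n>0)$ can be compared to $\law(Z_m\mid Z_m>0)$ at cost $c[(n-m)+\log^2 m]$, Lemma~\ref{l2}), and $\hat M_n(j)$ is replaced by $\sum_{i=1}^{\hat Z_m}M^i_{n,m}(j)$, a random sum of $\hat Z_m$ conditionally i.i.d.\ copies of $M_{n-m}(j)$, at cost $c\bclr{nm(n-m)^{-d/2}+(n-m)}$ (Lemma~\ref{lem:indwassbd}; this is where transience enters, and only through the \emph{first} moment of $Y_{n;m}$). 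The law of large numbers for that sum then needs only the crude bound $\Var(M_{n-m}(j))\leq 1+(n-m)\sigma^2$ together with $\abs{\IE[M_{n-m}(j)]-\kappa_j}\leq c(n-m)^{1-d/2}$, producing an error $\asymp\sqrt{n(n-m)}/n$; optimizing over $n-m$ gives the stated rate for every $d\geq3$ without any second-moment control of coincidence counts. If you want to pursue your direct approach, you must either prove the sharp bound $\IE[(M_n(j)-\mu_n(j)Z_n)^2]=\bigo(n^{3/(d+1)})$ for all $d\geq 3$ --- which requires estimating $\IE[Z_nY_{n;1}]$ essentially optimally, beyond what is in the paper --- or restructure around an intermediate generation as above.
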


Our next result refines Theorem~\ref{t1} when $d\geq7$ and the offspring distribution has finite higher moments.
To state the result, we define the $r$-dimensional symmetric Laplace distribution with covariance matrix $\Sigma$, denoted $\SL_r(\Sigma)$, as follows: If $E\sim \Exp(1)$ is independent of $\bZ$, which is a centered multivariate normal vector with covariance matrix $\Sigma$, then $\law(\sqrt{E} \bZ)=\SL_r(\Sigma)$. More important for our purposes is that the symmetric Laplace distribution arises as the scaled limit of a geometric random sum of i.i.d.\ centered
random variables; see Section~\ref{sec:lapl} for more details.
\begin{theorem}\label{t3}
Recall the definitions above, and let $r\geq 1$, $d\geq 7$, and offspring variable~$X$ satisfy $\IE[X]=1$ and $\IE\bcls{X^{5+\floor{\frac{18}{d-6}}}}<\infty$. For the positive constants $\kappa_1,\ldots, \kappa_r$ from Theorem~\ref{t1}, there exists a non-negative definite matrix $\wt\Sigma$, and a constant $c$ independent of $n$, such that 
\be{
\done\left(\law\bbbbclr{\frac{M_n(1)-\kappa_1 Z_n}{\sqrt{n}} ,\ldots ,\frac{M_n(r)-\kappa_r Z_n}{\sqrt{n}}\Big|Z_n>0},\SL_r\bclr{\wt\Sigma}\right)\leq c n^{-\frac{2d-9}{6(2d+1)}}.
}
\end{theorem}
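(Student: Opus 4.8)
The plan is to realize the conditioned joint vector $(M_n(1)-\kappa_1 Z_n,\ldots,M_n(r)-\kappa_r Z_n)$ as (approximately) a geometric random sum of i.i.d.\ centered vectors, and then apply a quantitative CLT for geometric sums in the Wasserstein metric. The first step is to recall the size-biasing / spine decomposition that underlies the Yaglom-type limit \eq{e1}: conditioned on $Z_n>0$, the population of generation $n$ decomposes, up to small error, into the $Z_n$ particles organized along the genealogy so that $Z_n/n$ is asymptotically $\Exp(\sigma^2/2)$; more usefully, $Z_n$ conditioned on survival is, to leading order, a geometric-type random variable (a discrete analog of the exponential) with mean of order $n$. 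Writing $N \approx Z_n$ for this (approximately geometric) number of summands, the key structural fact to establish is that, conditionally on the generation-$n$ particle locations being ``spread out'' (which happens with high probability for $d\geq 3$ since the range of the BRW is large), the increments $M_n(j)$ are built from a sum over particles (or over spatial cells) of roughly i.i.d.\ contributions $\xi^{(j)}_i$ with $\IE[\xi^{(j)}_i]=\kappa_j$ in an appropriate sense. The centered vector $(\xi^{(1)}_i-\kappa_1,\ldots,\xi^{(r)}_i-\kappa_r)$ has some covariance matrix $\Sigma$; then $M_n(j)-\kappa_j Z_n \approx \sum_{i=1}^{N}(\xi^{(j)}_i-\kappa_j)$, a geometric sum of i.i.d.\ centered vectors, whose scaled limit is exactly $\SL_r(\wt\Sigma)$ with $\wt\Sigma$ proportional to $\Sigma$. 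This is the content flagged in Section~\ref{sec:lapl}, and it explains why the Laplace distribution (rather than a Gaussian) appears: the number of summands $N$ is itself random and exponentially distributed in the limit.

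The second step is the quantitative part: given a (possibly perturbed, possibly weakly dependent) geometric sum $\sum_{i=1}^N \eta_i$ of mean-zero vectors, bound $\dw$ between its $n^{-1/2}$-rescaling and $\SL_r(\wt\Sigma)$. The natural tool is Stein's method for the symmetric Laplace distribution, combining a Stein characterization of $\SL_r$ (built from the exponential Stein equation for $E$ and the Gaussian Stein equation for $\bZ$, via the mixture representation $\sqrt{E}\,\bZ$) with the geometric-sum structure. Concretely, one conditions on $N$, applies a multivariate normal Stein bound to get $\law(\sum_{i\le N}\eta_i \mid N)\approx\MVN(0,N\Sigma)$ with an error controlled by third moments of $\eta_i$ and by $1/\sqrt{N}$, and then uses the exponential approximation $N/n\approx\Exp$ — for which a Stein's method rate is already available (e.g., \cite{Pekoz2011a}) — to pass from $\MVN(0,N\Sigma)$ to $\SL_r(\wt\Sigma)$ after scaling. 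The moment hypothesis $\IE[X^{5+\floor{18/(d-6)}}]<\infty$ will enter precisely here, to control the third-moment error terms of the cell-contributions (whose size depends on how concentrated the generation-$n$ particle cloud is, which degrades as $d\downarrow 7$) and to ensure the relevant sums are uniformly integrable.

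The third step is error bookkeeping: quantify the ``approximately'' in each reduction. One must bound (i) the Wasserstein cost of replacing $M_n(j)$ by the idealized sum $\sum_{i\le N}\xi^{(j)}_i$ — this requires control of multiplicities $\geq 2$ collisions, i.e.\ estimates on the number of sites hit by two or more generation-$n$ particles and its fluctuations, which is where the dimension restriction $d\geq7$ (versus merely $d\geq3$ for Theorem~\ref{t1}) really bites, since one now needs a genuine second-order control on the overlap statistics; (ii) the cost of the independence approximation among the $\xi_i$, handled via a local-CLT / Green's-function estimate showing the generation-$n$ particles occupy a region of volume $\asymp n^{d/2}$ so that collisions are rare and contributions decorrelate; and (iii) the cost of replacing the exact conditioned law of $N=Z_n$ by an exact geometric. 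Each of these should contribute an error that is a negative power of $n$, and optimizing the split between ``number of cells'', ``truncation level'', and ``moment order'' yields the exponent $\frac{2d-9}{6(2d+1)}$.

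The main obstacle I expect is step (iii)/(i) combined: obtaining a \emph{second-order} (fluctuation-level, not just law-of-large-numbers-level) description of the occupancy statistics $M_n(j)$ in terms of a clean geometric sum, with explicit Wasserstein error. Theorem~\ref{t1} only needs that $M_n(j)/n \to \kappa_j Z$, a first-order statement, so the spine decomposition plus crude collision bounds suffice there; here one must track the $\sqrt n$-order correction, which means understanding the covariance structure of the particle cloud at generation $n$ (including the spatial correlations induced by the shared genealogy) well enough to identify $\wt\Sigma$ and to show the remainder is $o(\sqrt n)$ quantitatively. Getting the requisite Green's-function / intersection-local-time estimates for the critical BRW range, sharp enough to yield a polynomial rate, and feeding them through the Stein apparatus for $\SL_r$, is the technical heart of the argument.
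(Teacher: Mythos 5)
Your high-level plan --- reduce to a geometric random sum of i.i.d.\ centered vectors and apply a quantitative R\'enyi-type theorem built from a conditional multivariate CLT plus a geometric-to-exponential coupling --- is exactly the paper's strategy (the paper's Theorem~\ref{thm:mvrenyi} is precisely the tool you sketch in your second step, proved by the triangle inequality through the representation $\sqrt{E}\,\bZ$ rather than by a bespoke Stein characterization of $\SL_r$). There is, however, a genuine gap in your first step: the identification of the i.i.d.\ summands and, crucially, their centering. You center the per-unit contributions by the \emph{constant} $\kappa_j$. The correct decomposition is over the $\hat Z_m$ ancestors at a generation $m$ with $n-m\asymp n^{20/(3(2d+1))}$, with $i$th summand $M_{n,m}^i(j)-\mu_{n,m}(j)Z_{n,m}^i$, where $Z_{n,m}^i$ is the number of generation-$n$ descendants of ancestor $i$: one must subtract the mean \emph{times the family's own generation-$n$ size}, not the mean alone. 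This matters for two reasons. First, $\Var\bclr{M_{n-m}(j)-\IE[M_{n-m}(j)]}$ grows like $n-m$, whereas $\Var\bclr{M_{n-m}(j)-\mu_{n-m}(j)Z_{n-m}}$ converges to a finite constant $\Sigma_{jj}$ (Lemma~\ref{lem:14}); only the latter gives summands with a stable covariance matrix, and the limiting $\Sigma$ is the limit of these \emph{family-level} covariances, not the covariance of a single particle's contribution as you assert. Second, with constant centering the partial sums track $M_n(j)-\kappa_j Z_m$, and the discrepancy $\kappa_j(Z_n-Z_m)$ has fluctuations of order $\sqrt{(n-m)n}$, which is not $\lito(\sqrt n)$; the family-size centering absorbs exactly this term.

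Relatedly, your suggestion that the per-particle contributions ``decorrelate'' because the particle cloud spreads over a region of volume $n^{d/2}$ cannot carry the argument at the fluctuation level: two generation-$n$ particles with a recent common ancestor remain strongly correlated no matter how transient the walk is, and the only way to manufacture independence is to group by generation-$m$ ancestor as above. Once grouped, the hard analytic content is not a Green's-function estimate but Lemma~\ref{lem:14}: a first-step recursion showing that $\Cov\bclr{M_n(j)-\mu_n(j)Z_n,\,M_n(k)-\mu_n(k)Z_n}$ is Cauchy with rate $n^{2-d/3}$ (this is where $d\geq 7$ enters), together with linear growth of the fourth moment of the centered summand (this is where the hypothesis $\IE\bcls{X^{5+\floor{18/(d-6)}}}<\infty$ enters, to control terms like $\IE[Z_n^3 Y_{n;1}]$). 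Without the corrected centering and this covariance lemma, the exponent $\frac{2d-9}{6(2d+1)}$ cannot be reached, and the limiting matrix $\wt\Sigma$ itself would not be identified correctly.
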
 

Before discussing the ideas behind the proofs of these two theorems, we  make a few remarks.
The limiting covariance matrix $\wt\Sigma$ is a constant multiple ($\Var(X)/2$) of the limit of the (unconditional) covariance matrix of $\clr{M_n(j)-Z_n\IE[M_n(j) ]}_{j=1}^r$, given by Lemma~\ref{lem:14} below. 
We are only able to show the limit exists, and cannot exclude the possibility that $\wt\Sigma$ is degenerate or even zero. 
Where Theorem~\ref{t3} applies, it implies a rate of convergence of $n^{-1/2}$ in Theorem~\ref{t1}. However, we present the results in this way, as it is conceptually natural, and simplifies the presentation of the proofs.  
An interesting open question from our study is what is the minimal dimension for which the convergence in Theorem~\ref{t3} occurs?
The assumption that $d\geq 7$ stems from Lemma~\ref{lem:14}, giving the behavior of the covariance matrix. It may be possible to sharpen some estimates, e.g.,~\eq{eq:221}, but there are others that may be sharp and still require $d\geq 7$, for example, the upper bound of~\eq{eq:221122}, which must be $\lito(1)$ (as $n\to\infty$) for our arguments to go through.

We now turn
to a discussion of the proofs, where a key result is the following rate of convergence for Yaglom's theorem from \cite{Pekoz2011a}.
\begin{theorem}(\cite[Theorem 3.3]{Pekoz2011a})\label{t2}
  With the definitions above, and any offspring variable~$X$ satisfying $\IE[ X]=1, \IE[ X^3]<\infty, \Var(X)=\sigma^2$, we have
 \be{
 \dw\bclr{\law\left(Z_n/n \, |\,Z_n>0\right),\Exp(\sigma^2/2)}\leq \frac{c \log n}{n}.
 }
\end{theorem}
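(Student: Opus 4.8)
The plan is to prove Theorem~\ref{t2} by Stein's method for the exponential distribution, combined with the equilibrium (stationary‑excess) transformation. The first step is a reduction to a mean‑one variable. Since the offspring mean is $1$ we have $\IE[Z_n\mid Z_n>0]=1/p_n=:\mu_n$, where $p_n=\IP(Z_n>0)$, and a classical estimate for the survival probability of a critical Galton--Watson process — available under $\IE[X^3]<\infty$ — gives $np_n=2/\sigma^2+\bigo(\log n/n)$; this is the source of the logarithm in the final bound. Setting $W_n:=Z_n/\mu_n$ conditioned on $\{Z_n>0\}$, so that $\IE W_n=1$, and writing $E\sim\Exp(1)$, the triangle inequality gives
\[
\done\bclr{\law(Z_n/n\mid Z_n>0),\,\Exp(\sigma^2/2)}
\le\frac{\mu_n}{n}\,\done\bclr{\law(W_n),\,\Exp(1)}
+\done\bclr{\Exp(np_n),\,\Exp(\sigma^2/2)},
\]
where the last term equals $\babs{1/(np_n)-2/\sigma^2}=\bigo(\log n/n)$ and $\mu_n/n\to2/\sigma^2$. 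So it remains to show $\done\bclr{\law(W_n),\Exp(1)}\le c/n$.

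For this we use the equilibrium characterisation: a non‑negative $W$ with $\IE W=1$ satisfies $\law(W)=\law(W^e)$, where $W^e$ is the equilibrium distribution with $\IP(W^e\le x)=\int_0^x\IP(W>t)\,dt$, if and only if $W\sim\Exp(1)$. For $h\in\cH_1$ the exponential Stein equation $f'-f=h-\IE h(E)$ has the bounded solution $f=f_h$ with $f_h(0)=0$ and $\norm{f_h'}_\infty\vee\norm{f_h''}_\infty\le c$ uniformly in $h$, and the defining property of the equilibrium distribution gives $\IE f_h(W_n)=\IE f_h'(W_n^e)$. Hence, for any coupling of $W_n$ with a copy of $W_n^e$,
\[
\babs{\IE h(W_n)-\IE h(E)}=\babs{\IE f_h'(W_n)-\IE f_h'(W_n^e)}\le\norm{f_h''}_\infty\,\IE\babs{W_n-W_n^e}\le c\,\IE\babs{W_n-W_n^e},
\]
so the problem reduces to constructing a coupling with $\IE\babs{W_n-W_n^e}\le c/n$.

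To do this, one first identifies $W_n^e$ explicitly. Using $\IE[Z_n\mid Z_n>0]=\mu_n$ one checks that $W_n^e\eqd\mu_n^{-1}U(Z_n^s)$, where $U(m)$ is uniform on $\{0,\dots,m-1\}$, independent of all else, and $Z_n^s$ is the size‑biased version of $Z_n$; by the spine decomposition of the size‑biased tree,
\[
Z_n^s\eqd1+\sum_{i=0}^{n-1}\sum_{j=1}^{\hat X_i-1}\wt Z_{n-1-i}^{(i,j)},
\]
with $\hat X_i$ i.i.d.\ size‑biased offspring variables and $\wt Z_m^{(i,j)}$ i.i.d.\ copies of the unconditioned generation‑$m$ population size. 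One then realises a copy of $(Z_n\mid Z_n>0)$ on the same probability space — exploiting that both the conditioned tree and the size‑biased tree consist of a distinguished line of descent to generation $n$ with ordinary, independent subtrees hanging off it — and bounds $\IE\babs{W_n-W_n^e}$ by summing the discrepancies contributed by the subtrees rooted at the successive levels, using $\IE[X^3]<\infty$ to control the second moments of the $\wt Z_m^{(i,j)}$. These per‑level contributions sum to $\bigo(1/n)$ after the $U$‑thinning and the normalisation by $\mu_n$, which together with the reduction step gives the bound $c\log n/n$.

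The main obstacle is precisely this coupling of the Galton--Watson tree conditioned on $\{Z_n>0\}$ with the size‑biased tree: the offspring law along the distinguished line of descent, and the law of where that line terminates, differ between the two constructions, and controlling the resulting $L^1$‑discrepancy of the generation‑$n$ counts — after the uniform thinning and the rescaling by $\mu_n$ — at the rate $\bigo(1/n)$ is the technical heart of the proof. Everything else (the Stein‑equation bounds, the identification of $W_n^e$, and the survival‑probability expansion) is either standard or a routine estimate.
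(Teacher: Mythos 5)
This theorem is not proved in the paper at all --- it is quoted verbatim from \cite[Theorem~3.3]{Pekoz2011a} --- so the relevant comparison is with the proof in that reference. Your outline is essentially a faithful reconstruction of it: reduce to the mean-one variable $W_n=p_nZ_n$ conditioned on survival, use the equilibrium characterisation of $\Exp(1)$ together with the Stein-equation bound $\abs{\IE h(W_n)-\IE h(E)}\leq \norm{f_h''}_\infty\,\IE\abs{W_n-W_n^e}$, realise $W_n^e$ by uniform thinning of the size-biased generation-$n$ population (the Lyons--Pemantle--Peres spine decomposition), and couple the conditioned tree to the size-biased tree along the distinguished line of descent. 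That is exactly the route taken in \cite{Pekoz2011a}, and the ingredients you invoke (the $\|f_h''\|$ bound, the identity $\IE f(W)=\IE f'(W^e)$ for mean-one $W$ with $f(0)=0$, the spine representation of $Z_n^s$) are all correct; the present paper's own size-biased tree construction and Lemma~\ref{lem:keyests} are precisely the machinery needed for the coupling you describe.

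Two caveats. First, as written this is a strategy rather than a proof: the decisive estimate $\IE\abs{W_n-W_n^e}$ is asserted, not established, and you correctly flag it as the technical heart. Second, your accounting of where the logarithm comes from is off. The per-generation discrepancies between the conditioned tree and the size-biased tree are of order $1/(n-j+1)$ (compare parts $(ii)$ and $(iv)$ of Lemma~\ref{lem:keyests} and the $\log^2$ in Lemma~\ref{l2}), so summing over the $n$ levels of the spine already produces a factor $\log n$ before dividing by $\mu_n\asymp n$; hence the coupling gives $\IE\abs{W_n-W_n^e}=\bigo(\log n/n)$, not $\bigo(1/n)$, and the logarithm in the theorem originates there rather than (only) in the survival-probability expansion. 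This does not break the argument, since $\bigo(\log n/n)$ is the target rate, but the claim that the level contributions ``sum to $\bigo(1/n)$'' would not survive the computation. A minor further point: $\mu_n^{-1}U(Z_n^s)$ is discrete while $W_n^e$ is absolutely continuous, so one must add an independent $\rU[0,1]$ to the thinning (or absorb an $\bigo(1/\mu_n)$ correction).
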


With this result in hand, the basic intuition behind Theorems~\ref{t1} and~\ref{t3} is that the number of multiplicity $j$ sites in generation $n$ is approximately a sum of a random number of conditionally independent random variables. The number of summands is $Z_m$, for a well-chosen $m<n$, and a given summand represents the contribution from only descendants of a single generation $m$ particle.
If $m$ is large, then Theorem~\ref{t2} implies that $Z_m$ will be roughly exponential with large mean; hence approximately geometrically distributed; and, if $d\geq 3$, the summands approximate the true variable because the random walk is transient, and most low occupancy sites consist of particles descended from exactly one individual in generation~$m$; see Lemma~\ref{lem:indwassbd}. Thus the vector of Theorem~\ref{t1} is approximately a geometric sum with small parameter, which, by R\'enyi's Theorem for geometric sums, is close to its mean times an exponential. For Theorem~\ref{t3}, the idea is similar, but the summands need to be centered for the Laplace limit to arise. A first thought is to subtract the mean of the summands, but in fact we must subtract the mean times a variable with mean one that is highly correlated to the summand to get the correct scaling; see Lemma~\ref{lem:14}. In order to obtain rates of convergence for the Laplace distribution, we prove a general approximation result for random sums, Theorem~\ref{thm:mvrenyi} below.

The approach of \cite{Lalley2011} used to obtain \eq{e1} uses a similar idea, but the conditioning is different to ours, and does not seem amenable to obtaining the error bounds necessary for Theorems \ref{t1} and~\ref{t3}.  Here we use couplings via an explicit construction of $(M_n(j)|Z_n>0)$, which is an elaboration of \cite{Lyons1995}, along with Theorem~\ref{t2}, to evaluate the bounds necessary to obtain Theorems~\ref{t1} and~\ref{t3}. We also use the explicit representation in a novel way to compare two different conditionings appearing in our argument; see Lemma~\ref{l2}. The use of the Wasserstein metric is essential in our argument, even if Kolmogorov bounds are the eventual goal (via standard smoothing arguments). 

The organization of the paper is as follows. In the next section we provide constructions and lemmas used to prove Theorems~\ref{t1} and~\ref{t3}.
In Section~\ref{sec:lapl} we state and prove our general Laplace approximation result, and then apply it to prove Theorem~\ref{t3}. Section~\ref{sec:CLT} gives some auxiliary multivariate normal approximation results that are adapted to our setting, and used in the proofs. 

\section{Constructions, moment bounds and proofs}
To prove Theorems~\ref{t1} and~\ref{t3}, we first need to relate $\law(Z_m|Z_n>0)$ to $\law(Z_m|Z_m>0)$ (in Lemma~\ref{l2} below). We use the size-biased tree construction from \cite{Lyons1995}.

\smallskip\noindent{\bf Size-biased tree construction.}
Assume that the tree is labeled and ordered, so if $w$ and $v$ are vertices in the tree from the same generation and $w$ is to the left of $v$, then the offspring of $w$ is to the left of the offspring of $v$, too.  Start in Generation 0 with one vertex $v_0$ and let it have a number of offspring distributed according to a size-biased version $X^s$ of $X$, so that
\[
\IP(X^s=k)=k\IP(X=k)/\IE[X].
\]
Pick one of the offspring of $v_0$ uniformly at random and call it $v_1$. To each of the siblings of $v_1$, attach an independent Galton-Watson branching process with offspring distribution $X$. For $v_1$ proceed as for $v_0,$ that is, give it a size-biased number of offspring, pick one at uniformly at random, call it $v_2$, attach independent Galton-Watson branching process to the siblings of $v_2$ and so on.
For $1\leq j\leq n$, denote by $L_{n,j}$ and $R_{n,j}$  the number of particles in generation~$n$ of this tree that are descendants of the siblings of $v_j$ to the left and right (excluding $v_j$). This gives rise to the size-biased tree. 

From this construction we define another tree~$T_n$ as follows.
 For $1\leq j\leq n$, let  $R'_{n,j}$ be independent random variables with 
\be{
\law(R'_{n,j})=\law(R_{n,j}|L_{n,j}=0).
}
Start with a single ``marked'' particle in generation~0,  represented as the root vertex of~$T_n$, and give this particle $R'_{n,1}$ offspring.  Then choose the leftmost offspring of the marked particle  as the generation 1 marked particle, and give it $R'_{n,2}$ offspring.  To continue, the generation~$j$ marked particle is the leftmost offspring of the marked particle in Generation~$j-1$, and has $R'_{n,j+1}$ offspring.  In addition, every non-marked particle has descendants according to an independent Galton-Watson  tree with offspring distribution $\law(X)$.  Let $T_n$ be the tree generated in this way to generation~$n$. The key fact from \cite[Theorem C$(i)$]{Lyons1995}, is that the distribution of~$T_n$ is the same as the \emph{entire tree} created from an ordinary Galton-Watson process with offspring distribution~$X$ conditional on non-extinction up to Generation $n$. Moreover, this tree and its marked particles can be closely coupled to the size-biased tree and the $v_j$ ``spine" offspring.

Now, let $A_{k,j} = \{ L_{k,j}=0\}$ and let $X_{m,j}, X_{m,j}'$ be random variables that are independent of each other and the size-biased tree constructed above, such that
	\ba{
	\law(X_{m,j}) &= \law(R_{m,j}+L_{m,j}| A_{m,j}), \\
	\law(X_{m,j}') &=\law(R_{m,j}+L_{m,j}| A_{n,j}),
	}
	and let 
	\besn{                                                            \label{39}
		Y_{m,j} &= R_{m,j} +L_{m,j}+ (X_{m,j} - R_{m,j} - L_{m,j}) \Ind{A_{m,j}^c}, \\
		Y'_{m,j} &= R_{m,j} +L_{m,j}+ (X_{m,j}' - R_{m,j} - L_{m,j}) \Ind{A_{n,j}^c}. 
	}
	Define also $Y_m = 1 + \sum_{j=1}^m Y_{m,j}$ and $Y'_m = 1 + \sum_{j=1}^m Y'_{m,j}$. We have the following result.
\begin{lemma}\label{lem:keyests}
Using the notation and definitions of the size-biased tree construction, for $j=1,\ldots,m$ and $k=m,n$, we have
	\ba{
		(i) &\enskip \law(Y_m) = \law(Z_m | Z_m > 0);\ \ \law(Y_m') = \law(Z_m | Z_n > 0);\\
		(ii) &\enskip \text{there is a constant $K$ such that if $n-m>K$, then} \\
		&\enskip
		\IE\bcls{(L_{m,j}+R_{m,j})\Ind{ A_{n,j}\setminus A_{m,j}}}\leq c \bbbcls{ \frac{(n-m)^2}{(n-j+1)^2}+\frac{\log(m-j+2)}{m-j+1} };\\
		(iii) &\enskip \text{$\IE\cls{(R_{m,j}+L_{m,j})\given A_{k,j}} \leq c $;} \\
		(iv) &\enskip 
		\text{$\IE\bcls{X_{m,j} \Ind{A_{m,j}^c}} \leq \frac{c}{m-j+1}$, and  $\IE\bcls{X_{m,j}' \Ind{A_{n,j}^c}} \leq \frac{c}{n-j+1}$.}
	}
\end{lemma}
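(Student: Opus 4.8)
The four claims are largely independent; I would establish (i), (iii) and (iv) by direct computation and reserve the real effort for (ii). \emph{Part (i)} should come out as bookkeeping on the two constructions: by \cite[Theorem~C$(i)$]{Lyons1995} the generation-$m$ total of $T_k$ (any $k\geq m$) has law $\law(Z_m\mid Z_k>0)$, and I would check from the definitions that $Y_m$ (resp.\ $Y_m'$) is precisely this total for $k=m$ (resp.\ $k=n$) --- the leading $1$ being the marked particle in generation $m$, and $Y_{m,j}$ (resp.\ $Y'_{m,j}$) the number of generation-$m$ particles descended from the non-marked offspring of the generation-$(j-1)$ marked particle. The only delicate point is that in the ambient size-biased tree the left bushes at level $j$ need not obey the constraint $A_{m,j}$ (resp.\ $A_{n,j}$) that defines the marked spine; the variables $X_{m,j},X'_{m,j}$ --- independent, with laws $\law(R_{m,j}+L_{m,j}\mid A_{m,j})$, $\law(R_{m,j}+L_{m,j}\mid A_{n,j})$ --- are substituted exactly on the events where it fails, and as bushes at distinct levels are conditionally independent given the spine, this reproduces the correct joint law (this is the coupling with the size-biased tree mentioned after the construction).

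\emph{Parts (iii) and (iv).} I would condition on the size-biased offspring number $X^s$ of $v_{j-1}$ and the number $I$ of left-siblings of $v_j$ (so $I$ is uniform on $\{0,\dots,X^s-1\}$, and the left and right bushes are $I$ and $X^s-1-I$ i.i.d.\ critical Galton--Watson trees, independent of one another given $(X^s,I)$). With $q_\ell:=\IP(Z_\ell=0)$ one has $\IP(A_{k,j}\mid X^s,I)=q_{k-j}^{\,I}$, so a short computation gives $\IE[X^s\mid A_{k,j}]=\bclr{1-\IE\,q^{X^s}}/\IE\bcls{(1-q^{X^s})/X^s}$ with $q=q_{k-j}$, which by $1-q\leq 1-q^\ell\leq\ell(1-q)$ ($\ell\geq1$) is at most $\IE[X^s]/\IE[1/X^s]=(1+\sigma^2)/\IP(X\geq1)=:c$. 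Since $L_{m,j}=0$ on $A_{m,j}$, this already yields $\IE[R_{m,j}+L_{m,j}\mid A_{m,j}]\leq\IE[X^s\mid A_{m,j}]\leq c$; for $k=n$ one also has $\IE[L_{m,j}\mid X^s,I,A_{n,j}]=I\,\IE[Z_{m-j}\mid Z_{n-j}=0]$ with $\IE[Z_{m-j}\mid Z_{n-j}=0]\leq 1/q_{n-j}\leq 1/\IP(X=0)$ (the case $\IP(X=0)=0$ being trivial, as then $A_{n,j}$ forces $I=0$), which completes (iii). Part (iv) is then immediate: $X_{m,j}$ is independent of $A_{m,j}$, so $\IE[X_{m,j}\Ind{A_{m,j}^c}]=\IE[X_{m,j}]\,\IP(L_{m,j}>0)\leq c\,\IE[I]\,p_{m-j}$, and $p_\ell:=\IP(Z_\ell>0)\leq c/(\ell+1)$ because $\sup_\ell(\ell+1)p_\ell<\infty$; the bound for $X'_{m,j}$ is the same with $n$ replacing $m$.

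\emph{Part (ii)} carries the content. I would split $\Ind{A_{n,j}\setminus A_{m,j}}(L_{m,j}+R_{m,j})$ into $L$- and $R$-parts and condition on $(X^s,I)$. Since $R_{m,j}$ is conditionally independent of the left bushes, the $R$-part is at most $\IE\bcls{(X^s-1-I)(1-q_{m-j}^{\,I})}\leq p_{m-j}\,\IE[(X^s)^2]=p_{m-j}\,\IE[X^3]\leq c/(m-j+1)$, absorbed by the second term of the claim. For the $L$-part, $A_{m,j}\subseteq A_{n,j}$ and $L_{m,j}\Ind{A_{m,j}}=0$ give $\IE[L_{m,j}\Ind{A_{n,j}\setminus A_{m,j}}]=\IE[L_{m,j}\Ind{A_{n,j}}]$, and conditioning on $(X^s,I)$ and using the i.i.d.\ structure of the left bushes, $\IE[L_{m,j}\Ind{A_{n,j}}]=\IE\bcls{I\,q_{n-j}^{\,I-1}}\,\IE\bcls{Z_{m-j}\Ind{Z_{n-j}=0}}\leq\tfrac{\sigma^2}{2}\,\IE\bcls{Z_{m-j}\Ind{Z_{n-j}=0}}$; conditioning a single bush on its generation-$(m-j)$ size then turns $\IE[Z_{m-j}\Ind{Z_{n-j}=0}]$ into $\IE[Z_{m-j}\,q_{n-m}^{\,Z_{m-j}}]$. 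So, with $a:=m-j$, $\beta:=n-m$ (hence $a+\beta=n-j$) and $\delta:=1-q_\beta=p_\beta$, everything reduces to
\[
\IE\bcls{Z_a\,q_\beta^{\,Z_a}}\ \leq\ \IE\bcls{Z_a\,e^{-\delta Z_a}}\ \leq\ c\,\bbbclr{\frac{\beta^2}{(a+\beta+1)^2}+\frac{\log(a+2)}{a+1}}.
\]

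\emph{The key estimate, and the main obstacle.} This last inequality is where Theorem~\ref{t2} enters. For $a$ below an absolute constant, bound the left side by $\IE[Z_a]=1\leq c\,\log(a+2)/(a+1)$; for larger $a$, write $\IE[Z_ae^{-\delta Z_a}]=\IP(Z_a>0)\cdot a\cdot\IE[h(Z_a/a)\mid Z_a>0]$ with $h(x)=xe^{-\delta a x}$, observe $\|h'\|_\infty\leq 1$ so $h\in\cH_1$, and apply Theorem~\ref{t2} to replace $\IE[h(Z_a/a)\mid Z_a>0]$ by $\IE\,h(E)=\tfrac{\sigma^2}{2}(\delta a+\tfrac{\sigma^2}{2})^{-2}\leq c(1+\delta a)^{-2}$ at cost $c\,\log a/a$ (here $E\sim\Exp(\sigma^2/2)$); with $\IP(Z_a>0)\leq c/a$ this gives $\IE[Z_ae^{-\delta Z_a}]\leq c(1+\delta a)^{-2}+c\log a/a$, and $\delta=p_\beta\geq c'/\beta$ converts $(1+\delta a)^{-2}$ into $\leq c\,\beta^2/(a+\beta+1)^2$ (the hypothesis $n-m>K$ serving only to secure this lower bound on $p_\beta$ with a uniform constant). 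The main obstacle is exactly this passage: controlling $\IE[Z_ae^{-\delta Z_a}\mid Z_a>0]$ up to an $\bigo(\log a)$ error. A crude inequality like $xe^{-\delta x}\leq(e\delta)^{-1}$ yields only $\IE[Z_ae^{-\delta Z_a}]\lesssim\IP(Z_a>0)/\delta\asymp\beta/a$, which is far too large; one genuinely needs the quantitative Yaglom theorem (Theorem~\ref{t2}) to see that, conditionally on survival, $Z_a$ concentrates at scale $a$ rather than $1/\delta$. All remaining steps are routine Galton--Watson first-moment computations.
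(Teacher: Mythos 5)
Your proof is correct, and the step that carries the whole lemma --- the estimate $\IE\bcls{Z_a\,q_\beta^{\,Z_a}}\leq c\bclr{(1+p_\beta a)^{-2}+\log a/a}$ obtained by applying the quantitative Yaglom theorem (Theorem~\ref{t2}) to the $1$-Lipschitz function $x\mapsto x e^{-\delta a x}$ and then Kolmogorov's estimate (Lemma~\ref{lem:kolesterr}) to turn $(1+p_\beta a)^{-2}$ into $\beta^2/(a+\beta+1)^2$ --- is exactly the paper's key step, including your correct diagnosis that the crude bound $xe^{-\delta x}\leq (e\delta)^{-1}$ is insufficient and that $n-m>K$ is needed only to secure $p_\beta\geq c'/\beta$. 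You do, however, organize the surrounding reductions differently in two places, both to your advantage. For the $L$-part of $(ii)$, the paper uses a union bound $\Ind{A_{n,j}\setminus A_{m,j}}\leq\sum_{\ell}\Ind{A_{n,j}^{(\ell)}\setminus A_{m,j}^{(\ell)}}$ over the left siblings, which produces the diagonal terms you treat plus off-diagonal cross terms that must be bounded separately by $c/(m-j+1)$; your exact identity $L_{m,j}\Ind{A_{n,j}\setminus A_{m,j}}=L_{m,j}\Ind{A_{n,j}}$ (valid since $L_{m,j}$ vanishes on $A_{m,j}$ and $A_{m,j}\subseteq A_{n,j}$), followed by direct factorization over the conditionally i.i.d.\ left bushes, avoids the cross terms entirely. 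For $(iii)$, the paper proves $\IE\bcls{(R_{m,j}+L_{m,j})\Ind{A_{k,j}}}\leq c\,\IE[X_j]\IP(A_{k,j})$ via a correlation inequality after coupling the spine position monotonically to the sibling count, whereas you compute $\IE[X^s\given A_{k,j}]$ in closed form and bound it by $\IE[X^s]/\IE[1/X^s]$; both are short, and yours is arguably more transparent (your handling of the $L$-contribution for $k=n$ via $\IE[Z_{m-j}\given Z_{n-j}=0]\leq 1/\IP(X=0)$ is a correct substitute for the paper's monotonicity argument, with the degenerate case $\IP(X=0)=0$ excluded as in the paper). Parts $(i)$ and $(iv)$ match the paper's treatment.
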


Before proving the lemma, we state a key result for controlling the conditioning on non-extinction is the following second order version of ``Kolmogorov's estimate" found in \cite[Display between (5) and (6)]{Vatutin1985}.
\begin{lemma}[Kolmogorov's estimate]\label{lem:kolesterr}
If the offspring variable~$X$ satisfies $\IE[X]=1, \IE[X^3]<\infty, \Var(X)=\sigma^2$,
then as $n\to\infty$,
\be{
\IP(Z_n>0)=\frac{2}{n \sigma^2 }+\bigo\bbbclr{\frac{\log^2(n)}{n^2}}.
}
In particular, $n\IP(Z_n>0)\to 2/\sigma^2$.
\end{lemma}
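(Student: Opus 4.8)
The plan is to run the classical generating‑function iteration, sharpened by one bootstrap step to obtain the stated error term (one could instead simply cite \cite{Vatutin1985}, where this second‑order version of Kolmogorov's estimate is recorded). Let $f(s)=\IE[s^X]$ be the offspring probability generating function and $f_n$ its $n$‑fold iterate, so that $f_n(s)=\IE[s^{Z_n}]$; writing $u_n:=\IP(Z_n>0)=1-f_n(0)$ with $u_0=1$, one has the recursion $u_{n+1}=1-f(1-u_n)$, and $u_n>0$ for every $n$ since $\IE[Z_n]=1$ forbids almost sure extinction by any finite generation. As $\IE[X]=1$ and $\Var(X)=\sigma^2\in(0,\infty)$ the process is critical and non‑degenerate, so $u_n\downarrow 0$. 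Because $f$ is a power series with radius of convergence at least $1$ and $\IE[X^3]<\infty$, we may Taylor expand at $s=1$ using $f(1)=1$, $f'(1)=1$, $f''(1)=\IE[X(X-1)]=\sigma^2$ and $f'''(1)=\IE[X(X-1)(X-2)]<\infty$, to get, as $u\downarrow0$,
\be{
f(1-u)=1-u+\tfrac{\sigma^2}{2}u^2+\bigo(u^3),
}
and hence $u_{n+1}=u_n-\tfrac{\sigma^2}{2}u_n^2+\bigo(u_n^3)$.

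The next step is to pass to reciprocals, which linearizes this into $\tfrac1{u_{n+1}}-\tfrac1{u_n}=\tfrac{\sigma^2}{2}+\bigo(u_n)$. A first pass, using only $u_n\to0$ (by Ces\`aro averaging / Stolz--Ces\`aro), yields $u_n\sim\tfrac{2}{\sigma^2 n}$, and in particular $u_n\leq c/n$ for all $n\geq1$. Feeding this bound back into the error term gives $\sum_{k=1}^{n-1}\bigo(u_k)=\bigo(\log n)$, so telescoping the reciprocal recursion from $1$ to $n-1$ produces $\tfrac1{u_n}=\tfrac{\sigma^2}{2}n+\bigo(\log n)$. Inverting, $u_n=\tfrac{2}{\sigma^2 n}\bigl(1+\bigo(\log n/n)\bigr)^{-1}=\tfrac{2}{\sigma^2 n}+\bigo(\log n/n^2)$, which is in fact a touch sharper than the claimed $\bigo(\log^2 n/n^2)$, and the final assertion $n\IP(Z_n>0)\to2/\sigma^2$ follows at once.

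The only genuinely delicate point is extracting the \emph{error term} rather than just the leading‑order asymptotic $u_n\sim 2/(\sigma^2 n)$ (which needs only a finite variance). This is exactly where the third‑moment hypothesis is used: it makes the Taylor remainder of $f(1-u)$ genuinely $\bigo(u^3)$ — so the reciprocal increments carry a summable $\bigo(u_n)$ correction once the crude bound $u_n=\bigo(1/n)$ is available — rather than merely $\lito(u^2)$, which would leave the increments equal to $\tfrac{\sigma^2}{2}+\lito(1)$ with no usable rate. Everything else is bookkeeping with the telescoped sum.
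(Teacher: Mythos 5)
Your proof is correct. Note that the paper does not actually prove this lemma at all: it is quoted directly from \cite{Vatutin1985} (the ``display between (5) and (6)''), exactly the alternative you mention in your opening parenthesis. So your contribution is a self-contained derivation where the paper has only a citation. The argument you give is the standard one and all the steps check out: $f'''$ is bounded on $[0,1]$ by $\IE[X(X-1)(X-2)]<\infty$, so the Taylor remainder is uniformly $\bigo(u^3)$; the reciprocal increments $\frac{1}{u_{k+1}}-\frac{1}{u_k}=\frac{\sigma^2}{2}+\bigo(u_k)$ are valid uniformly once $u_k$ is small (the finitely many initial terms contribute $\bigo(1)$); the Ces\`aro step recovers Kolmogorov's first-order asymptotic $u_n\sim 2/(\sigma^2 n)$; and the bootstrap $\sum_{k\leq n}\bigo(u_k)=\bigo(\log n)$ then gives $1/u_n=\frac{\sigma^2}{2}n+\bigo(\log n)$. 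As you observe, this yields the error term $\bigo(\log n/n^2)$, which is slightly sharper than the $\bigo(\log^2(n)/n^2)$ recorded in the lemma, so the stated bound follows a fortiori. Your closing remark correctly identifies the role of the third-moment assumption: without it the increments are only $\frac{\sigma^2}{2}+\lito(1)$ and no rate survives the telescoping.
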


\begin{proof}[Proof of Lemma~\ref{lem:keyests}]
	Statement $(i)$ follows from the key fact stated above from \citet[Proof of Theorem
	C$(i)$]{Lyons1995}. 
	 For $(ii)$, let $X_j$ denote the number of
	siblings of $v_j$, distributed as $\law(X^s-1)$, $I_j\sim\rU\{1,\ldots,X_j+1\}$ be the position of $v_j$ in its siblings (labeled left to right), and for $i=1,\ldots,I_j-1$, let $L_{m,j}\sp{i}$ be the number of offspring in generation $m$ descended from the $i$th sibling (labeled from the left) of $v_j$ and $A_{k,j}\sp{i}$ be the event $\{L_{k,j}\sp{i}=0\}$; $k=m,n$. Then
	 \ban{
	\IE\bcls{L_{m,j} &\Ind{A_{n,j}\setminus A_{m,j}}} \notag \\
		&\leq\IE\bbcls{  \sum_{i,\ell=1}^{I_j-1}L_{m,j}\sp{i} \Ind{A_{n,j}\sp{\ell}\setminus A_{m,j}\sp{\ell}} } \notag \\
		&=\IE\bbcls{\sum_{i=1}^{I_j-1}L_{m,j}\sp{i} \Ind{A_{n,j}\sp{i}\setminus A_{m,j}\sp{i}}} +\IE\bbcls{ \mathop{\sum_{i,\ell=1}^{I_j-1}}_{\ell\not=i}\IE\bcls{ L_{m,j}\sp{i}| I_j, X_j }\IP\clr{A_{n,j}\sp{\ell}\setminus A_{m,j}\sp{\ell} | I_j, X_j }}  \notag  \\
		&\leq \IE\bbcls{\sum_{i=1}^{I_j-1}L_{m,j}\sp{i} \Ind{A_{n,j}\sp{i}\setminus A_{m,j}\sp{i}}} +\IP\clr{Z_{m-j}>0}\IE\bbcls{ \mathop{\sum_{i,\ell=1}^{I_j-1}}_{\ell\not=i}\IE\bcls{ L_{m,j}\sp{i}| I_j, X_j }} \notag \\
		&\leq \IE\bbcls{\sum_{i=1}^{I_j-1}L_{m,j}\sp{i} \Ind{A_{n,j}\sp{i}\setminus A_{m,j}\sp{i}}} +\IP\clr{Z_{m-j}>0}\IE\bcls{X_j \IE[L_{m,j}|X_j]} \notag \\
				&\leq \IE\bbcls{\sum_{i=1}^{I_j-1}L_{m,j}\sp{i} \Ind{A_{n,j}\sp{i}\setminus A_{m,j}\sp{i}}} +\IP\clr{Z_{m-j}>0}\IE[X_j^2] \notag \\
				&\leq \IE\bbcls{\sum_{i=1}^{I_j-1}L_{m,j}\sp{i} \Ind{A_{n,j}\sp{i}\setminus A_{m,j}\sp{i}}} +\frac{ c}{m-j+1},\notag
	 } 
	 where the first inequality is a union bound, the equality is by independence of lineages, the second inequality is because $\law(L_{m,j}\sp{i})=\law(Z_{m-j})$, and the last is by Lemma~\ref{lem:kolesterr}.
 To bound further, we have that, conditional on $X_j, I_j$, using the independence of lineages,
	 \ba{
	 \IE\bcls{L_{m,j}\sp{i}& \Ind{A_{n,j}\sp{i}\setminus A_{m,j}\sp{i}}} \\
	 &=\IE\bbcls{L_{m,j}\sp{i} \Ind{A_{n,j}\sp{i}}\big\vert L_{m,j}\sp{i}>0} \IP\clr{L_{m,j}\sp{i}>0} \\
	 &=\IE\bbcls{\frac{L_{m,j}\sp{i}}{m-j+1} \bbclr{ \IP\clr{Z_{n-m}=0}^{m-j+1}}^{\frac{L_{m,j}\sp{i}}{m-j+1} }\big\vert L_{m,j}\sp{i}>0} \IP\clr{Z_{m-j}>0}(m-j+1) \\
	  &\leq c \IE\bbcls{\frac{L_{m,j}\sp{i}}{m-j+1} \bbclr{ \IP\clr{Z_{n-m}=0}^{m-j+1}}^{\frac{L_{m,j}\sp{i}}{m-j+1} }\big\vert L_{m,j}\sp{i}>0},
	 }
	 where we have used Lemma~\ref{lem:kolesterr}.
The function $g(x)=xa^x$ is $1$-Lipschitz on $(0,\infty)$ for any~$a<1$, so we can apply Theorem~\ref{t2} and use the fact that $\law(L_{m,j}\sp{i})=\law(Z_{m-j})$ to find that for $W\sim\Exp(2/\sigma^2)$,
	\ba{
	\IE\bbcls{\frac{L_{m,j}\sp{i}}{m-j+1}& \bbclr{ \IP\clr{Z_{n-m}=0}^{m-j+1}}^{\frac{L_{m,j}\sp{i}}{m-j+1} }\big\vert L_{m,j}\sp{i}>0} \\
	&\leq \IE\bbcls{W \bbclr{ \IP\clr{Z_{n-m}=0}^{m-j+1}}^{W }} + c\frac{\log(m-j+1)}{m-j+1} \\
	&=c\bbbcls{\bbclr{2\sigma^{-2}-(m-j+1)\log\bclr{\IP\clr{Z_{n-m}=0}}}^{-2}+\frac{\log(m-j+1)}{m-j+1} }\\
	&\leq c\bbbcls{\bbclr{2\sigma^{-2}+(m-j+1)\IP\clr{Z_{n-m}>0}}^{-2}+\frac{\log(m-j+1)}{m-j+1} } \\
	&\leq c\bbbcls{\bbclr{1+\frac{m-j+1}{n-m}-c' \frac{(m-j+1)(\log(n-m)^2}{(n-m)^2}}^{-2}+\frac{\log(m-j+1)}{m-j+1} },
	} 	
	where we have used Lemma~\ref{lem:kolesterr},
	and we now choose $K$ so that 
	$c' \log(n-m)^2/(n-m)<1/2$ whenever $n-m>K$. Using this bound and combining the last three displays, 
we have
	\ban{
	&\IE\bcls{L_{m,j} \Ind{A_{n,j}\setminus A_{m,j}}} \notag \\
		&\leq c\bbbclc{\IE[X_j] \bbcls{\bbclr{1+\frac{m-j+1}{n-m}-c' \frac{(m-j+1)\log(n-m)^2}{(n-m)^2}}^{-2}+\frac{\log(m-j+1)}{m-j+1} }+\frac{1}{m-j+1}} \notag \\
		&\leq c\bbbclc{ \bbbclr{1+\left(\frac{1}{2}\right)\left(\frac{m-j+1}{n-m}\right)}^{-2}+\frac{\log(m-j+1)}{m-j+1} +\frac{1}{m-j+1}} \notag \\
		&\leq c\bbbclc{ \bbclr{1+\frac{m-j+1}{n-m}}^{-2}+\frac{\log(m-j+2)}{m-j+1} }. \label{eq:lpr1}
	}
Now noting that given $I_j$ and $X_j$, $R_{m,j}$ is independent of $A_{m,j}$ and $A_{n,j}$, we easily find from Lemma~\ref{lem:kolesterr} that 
\be{
\IE\bcls{R_{m,j}\Ind{ A_{n,j}\setminus A_{m,j}}}\leq \IE\bbcls{\IE\bcls{R_{m,j}|X_j, I_j}\IP\bclr{A_{n,j}\setminus A_{m,j}| X_j, I_j}}.
}
A union bound implies
\be{
\IP\bclr{A_{n,j}\setminus A_{m,j}| X_j, I_j}\leq \IP\bclr{A_{m,j}^c| X_j, I_j}\leq X_j \IP(Z_{m-j}>0),
}
and clearly $\IE\cls{R_{m,j}|X_j, I_j}\leq X_j$, so altogether, using Lemma~\ref{lem:kolesterr}, 
\be{
\IE\bcls{R_{m,j}\Ind{ A_{n,j}\setminus A_{m,j}}}\leq \IE[X_j^2] \IP(Z_{m-j}>0) \leq \frac{c}{m-j+1}.
}
Combining this with~\eq{eq:lpr1} shows $(ii)$.

For $(iii)$, we show that  for $k=m,n$,
\ben{\label{eq:negcor}
\IE\bcls{(R_{m,j}+L_{m,j})\Ind{A_{k,j}}} \leq c \IE[X_j ]\IP(A_{k,j}),
}
which easily implies the result. To show~\eq{eq:negcor}, 
we use the following correlation inequality: if $f$ is non-decreasing and $g$ is non-increasing, then $\Cov(f(X),g(X))\leq 0$. Note that
\ba{
 0=\IE\bcls{L_{m,j}\Ind{A_{m,j}}|X_j, I_j}&\leq \IE\bcls{L_{m,j}|X_j, I_j} \IE\bcls{\Ind{A_{m,j}}|X_j, I_j}, \\
 \IE\bcls{R_{m,j}\Ind{A_{m,j}}|X_j, I_j}&\leq \IE\bcls{R_{m,j}|X_j, I_j} \IE\bcls{\Ind{A_{m,j}}|X_j, I_j},
}
where the first line is obvious  and the second is because of conditional independence. Since $\IE\cls{R_{m,j}+L_{m,j}|X_j, I_j}=X_j$, we then have
\be{
 \IE\bcls{(R_{m,j}+L_{m,j})\Ind{A_{m,j}}|X_j, I_j}\leq X_j\IP\bclr{A_{m,j} |X_j, I_j}
 	=X_j\IE\bcls{ \IP\clr{Z_{m-j}=0}^{I_j-1} | X_j }.
	}
But we can couple $I_j$ to $X_j$ in such a way that it is non-decreasing with $X_j$, and thus the correlation inequality implies
\be{
\IE\bbcls{X_j\IE\bcls{ \IP\clr{Z_{m-j}=0}^{I_j-1} | X_j }}\leq \IE[X_j] \IE\bcls{ \IP\clr{Z_{m-j}=0}^{I_j-1}}=\IE[ X_j ]\IP(A_{m,j}).
}
Combining the last two displays implies~\eq{eq:negcor} 
for $k=m$. For $k=n$, using similar ideas,
\be{
\IE\bcls{L_{m,j}\Ind{A_{n,j}}| L_{m,j}}=L_{m,j}\IP(Z_{n-m}=0)^{L_{m,j}},
}
and the second factor decreases with $L_{m,j}$ and so 
\be{
\IE\bcls{L_{m,j}\Ind{A_{n,j}}}\leq \IE[ L_{m,j}] \IP(A_{n,j})\leq \IE[X_j] \IP(A_{n,j}).
}
Finally, 
\be{
\IE\bcls{R_{m,j}\Ind{A_{n,j}}| X_j, I_j} = (X_j+1-I_j) \IP(Z_{n-j}=0)^{I_j-1}\leq X_j \IP(Z_{n-j}=0)^{I_j-1}.
}
But again we can couple $(X_j, I_j)$ such that $I_j$ is non-decreasing in $X_j$, and thus
\bes{
\IE\bcls{R_{m,j}\Ind{A_{n,j}}}\leq \IE[ X_j]  \IE\bcls{P(Z_{n-j}=0)^{I_j-1}}=\IE[ X_j ]\IP(A_{n,j}).
}

Finally, to show $(iv)$, we have
	\ba{
	\IE\bcls{X_{m,j}\Ind{A_{m,j}^c}}&= \IE[X_{m,j}]\IP(A_{m,j}^c)=\IE[R_{m,j}+L_{m,j}|A_{m,j}]\IP(A_{m,j}^c), \\
	\IE\bcls{X_{m,j}'\Ind{A_{n,j}^c}}&= \IE[X_{m,j}' ]\IP(A_{n,j}^c)=\IE[R_{m,j}+L_{m,j}|A_{n,j}]\IP(A_{n,j}^c),
	}
	and then $(iv)$ easily follows from $(iii)$
and
\be{
\IP(A_{k,j}^c)=\IE\bcls{\IP\bclr{A_{k,j}^c| X_j}}\leq \IE[ X_j] \IP(Z_{k-j}>0)\leq \frac{c}{k-j+1}. \qedhere
}
\end{proof}

\begin{lemma}\label{l2}
	If $Z_k$ denotes the size of the $k$th generation of a Galton-Watson tree with offspring distribution $X$ satisfying $\IE[X]=1$, and $\IE[X^3]<\infty$, and $m\leq n$ with $n-m\to\infty$, then
	$$\dw(\law(Z_m|Z_m>0),\law(Z_m|Z_n>0))\leq c\bcls{ (n-m) + \log^2(m)}.$$
\end{lemma}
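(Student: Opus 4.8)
The plan is to put both conditional laws on a single probability space using the size-biased tree construction and then bound the $L_1$-distance of the two coupled variables. By Lemma~\ref{lem:keyests}$(i)$ we have $\law(Z_m\mid Z_m>0)=\law(Y_m)$ and $\law(Z_m\mid Z_n>0)=\law(Y'_m)$, where $Y_m=1+\sum_{j=1}^m Y_{m,j}$ and $Y'_m=1+\sum_{j=1}^m Y'_{m,j}$ are built from the \emph{same} size-biased tree; in particular $R_{m,j}$, $L_{m,j}$ and the events $A_{m,j}=\{L_{m,j}=0\}$, $A_{n,j}=\{L_{n,j}=0\}$ are shared by the two expansions, while $X_{m,j},X'_{m,j}$ are independent of the tree and of each other. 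Since $1$-Lipschitz functions $h$ satisfy $|\IE h(Y_m)-\IE h(Y'_m)|\le\IE|Y_m-Y'_m|$, it suffices to show $\IE|Y_m-Y'_m|\le\sum_{j=1}^m\IE|Y_{m,j}-Y'_{m,j}|\le c\bigl[(n-m)+\log^2 m\bigr]$.

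The key observation is that $A_{m,j}\subseteq A_{n,j}$, because a lineage with no generation-$m$ descendants has no generation-$n$ descendants. Splitting on the three events $A_{m,j}$, $A_{n,j}\setminus A_{m,j}$ and $A_{n,j}^c$ and reading off the definitions in~\eqref{39}, one checks: on $A_{m,j}$ both $Y_{m,j}$ and $Y'_{m,j}$ equal $R_{m,j}$; on $A_{n,j}\setminus A_{m,j}$ one has $Y_{m,j}=X_{m,j}$ while $Y'_{m,j}=R_{m,j}+L_{m,j}$; and on $A_{n,j}^c$ one has $Y_{m,j}=X_{m,j}$, $Y'_{m,j}=X'_{m,j}$. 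Therefore
\[
|Y_{m,j}-Y'_{m,j}|\le(X_{m,j}+R_{m,j}+L_{m,j})\Ind{A_{n,j}\setminus A_{m,j}}+(X_{m,j}+X'_{m,j})\Ind{A_{n,j}^c}.
\]
To estimate the first expectation, I would use that $X_{m,j}$ is independent of the tree together with Lemma~\ref{lem:keyests}$(iii)$ to get $\IE[X_{m,j}\Ind{A_{n,j}\setminus A_{m,j}}]=\IE[X_{m,j}]\IP(A_{n,j}\setminus A_{m,j})\le c\,\IP(A_{n,j}\setminus A_{m,j})$, then bound $\IP(A_{n,j}\setminus A_{m,j})\le\IE[(R_{m,j}+L_{m,j})\Ind{A_{n,j}\setminus A_{m,j}}]$ (valid since $L_{m,j}\ge1$ on that event), so that Lemma~\ref{lem:keyests}$(ii)$ bounds the first term by $c\bigl[(n-m)^2/(n-j+1)^2+\log(m-j+2)/(m-j+1)\bigr]$. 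For the second expectation, Lemma~\ref{lem:keyests}$(iii)$ gives $\IE[X_{m,j}],\IE[X'_{m,j}]\le c$ while $\IP(A_{n,j}^c)\le\IE[X_j]\IP(Z_{n-j}>0)\le c/(n-j+1)$ (the last display in the proof of Lemma~\ref{lem:keyests}), so that term is at most $c/(n-j+1)$. Assuming $n-m>K$ so that Lemma~\ref{lem:keyests}$(ii)$ applies, I would then sum over $j$: changing variables, $\sum_{j=1}^m(n-m)^2/(n-j+1)^2=(n-m)^2\sum_{k=n-m+1}^{n}k^{-2}\le n-m$, $\sum_{j=1}^m\log(m-j+2)/(m-j+1)=\sum_{i=1}^m\log(i+1)/i\le c\log^2 m$, and $\sum_{j=1}^m 1/(n-j+1)\le\sum_{j=1}^m 1/(m-j+1)\le c\log m$, which yields the claimed bound.

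Everything above is elementary once the coupling and Lemma~\ref{lem:keyests} are available, so the only delicate point is the case analysis together with the bookkeeping that $Y_m$ and $Y'_m$ are genuinely driven by one size-biased tree, so that $R_{m,j},L_{m,j},A_{m,j},A_{n,j}$ really do agree in the two representations. Conceptually, the $n-m$ term in the bound is forced by the event $A_{n,j}\setminus A_{m,j}$ — a generation-$m$ lineage that dies before generation $n$ — whose probability is not summably small over $j$, while the $\log^2 m$ is the accumulation over $j$ of the $O(1/(m-j+1))$ discrepancy between conditioning a contribution on $A_{m,j}$ versus not conditioning it at all.
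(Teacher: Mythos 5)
Your proposal is correct and follows essentially the same route as the paper: couple the two conditional laws through $Y_m$ and $Y'_m$ from the size-biased tree construction (Lemma~\ref{lem:keyests}$(i)$), split on $A_{m,j}$, $A_{n,j}\setminus A_{m,j}$, $A_{n,j}^c$ using that $A_{m,j}\subseteq A_{n,j}$, invoke parts $(ii)$--$(iv)$ of that lemma, and sum over $j$. The only (harmless) deviation is that you bound $\IE\bcls{X_{m,j}\Ind{A_{n,j}\setminus A_{m,j}}}$ via $\IP(A_{n,j}\setminus A_{m,j})\leq\IE\bcls{(R_{m,j}+L_{m,j})\Ind{A_{n,j}\setminus A_{m,j}}}$ rather than citing part $(iv)$ directly for $\IE\bcls{X_{m,j}\Ind{A_{m,j}^c}}$.
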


\begin{proof}[Proof of Lemma \ref{l2}]
The coupling definition of Wasserstein distance and Lemma~\ref{lem:keyests}, parts $(i), (ii), (iv)$, yield an upper bound of 
\bes{
		\IE|Y'_m-Y_m|
		&\leq  \sum_{j=1}^m \IE\bcls{X_{m,j}\Ind{A_{m,j}^c} +X_{m,j}'\Ind{A_{n,j}^c}} \\
		&	\hspace{10mm} +\sum_{j=1}^m \IE\bcls{(R_{m,j} + L_{m,j}) \Ind{A_{m,j}^c\cap A_{n,j}}}\\
		& \leq	c\sum_{j=1}^m  
		\bbbclr{\frac{1}{m-j+1}+\frac{(n-m)^2}{(n-j+1)^2}+\frac{\log(m-j+2)}{m-j+1} }\\
		& \leq c\bcls{ (n-m)+\log^2(m)}.   \qedhere
	}	
\end{proof}

To continue, we need a lemma giving some moment information for variables in the BRW.
\begin{lemma} \label{l1}
Assume the definitions and constructions above, and let 
$Y_{n;m}$ denote the number of particles in generation $n$ of the BRW that occupy a site with another generation $n$ particle that has a different ancestor at generation~$m$. We have the following:
	\begin{enumerate}
		\item The limit $\kappa_j := \lim_{n\rightarrow\infty} \IE[M_n(j) ]$ exists, and
		$$|\IE[M_n(j)] -\kappa_j|\leq c n^{1-d/2};$$
		\item $\Var(M_n(j))\leq 1+n\sigma^2$;
		\item $\IE[Y_{n;m}|Z_n>0]\leq cn(1+m\sigma^2)(n-m)^{-d/2}$.
		\end{enumerate}
\end{lemma}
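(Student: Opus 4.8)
Part (2) is immediate, part (3) is a first-moment bound on coinciding pairs combined with the local CLT and Lemma~\ref{lem:kolesterr}, and part (1) --- the substantive one --- goes through the size-biased (many-to-one) representation together with a stationarity-in-$n$ property of the spine near generation~$n$. \emph{For (2):} every multiplicity-$j$ site contributes $j$ to $Z_n$, so $0\le M_n(j)\le Z_n/j\le Z_n$ pointwise, whence $\Var(M_n(j))\le\IE[M_n(j)^2]\le\IE[Z_n^2]=\Var(Z_n)+(\IE Z_n)^2=n\sigma^2+1$, using $\IE Z_n=1$ and the critical-case identity $\Var(Z_n)=n\sigma^2$.

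\emph{For (3):} I would bound $Y_{n;m}$ by the number $P_{n,m}$ of ordered pairs of distinct generation-$n$ particles occupying a common site and having distinct generation-$m$ ancestors. Conditioning on the tree and displacements up to generation~$m$, the subtrees rooted at the $Z_m$ generation-$m$ particles (with their subsequent displacements) are mutually independent, so for an ordered pair $(a,b)$ of distinct generation-$m$ particles at positions $y_a,y_b$ the conditional expected number of coinciding generation-$n$ descendant pairs equals $\sum_z f_a(z)f_b(z)$ with $f_a(z)=\IE[\#\{\text{gen-}n\text{ descendants of }a\text{ at }z\}]$. Applying the many-to-one identity inside each (critical-mean) subtree gives $f_a(z)=p^{(n-m)}(z-y_a)$, the $(n-m)$-step lazy-walk kernel, so $\sum_z f_a(z)f_b(z)=p^{(2(n-m))}(y_b-y_a)\le c(n-m)^{-d/2}$ by symmetry of the walk and the local CLT for the aperiodic centred lazy walk. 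Summing over the $Z_m(Z_m-1)$ ordered pairs and taking expectations, $\IE[P_{n,m}]\le c(n-m)^{-d/2}\IE[Z_m(Z_m-1)]=cm\sigma^2(n-m)^{-d/2}$, and dividing by $\IP(Z_n>0)\ge c/n$ (Lemma~\ref{lem:kolesterr}) yields $\IE[Y_{n;m}\mid Z_n>0]\le\IE[P_{n,m}]/\IP(Z_n>0)\le cn(1+m\sigma^2)(n-m)^{-d/2}$.

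\emph{For (1):} Using the size-biased tree construction and the many-to-one lemma (see \cite{Lyons1995}), $j\,\IE[M_n(j)]=\wh\IP(V_n=j-1)$, where $V_n$ is the number of non-spine generation-$n$ particles at the position of the spine vertex $v_n$ and $\wh\IP$ refers to the size-biased tree with all particles performing the lazy walk. Each non-spine generation-$n$ particle descends from a sibling of exactly one $v_k$, $1\le k\le n$, so $V_n=\sum_{k=1}^n V_{n,k}$ with $V_{n,k}$ the number of generation-$n$ descendants of siblings of $v_k$ at $v_n$'s position; re-index from the top by $\wt V_n^{(\ell)}:=V_{n,\,n-\ell+1}$. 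A computation of the same type as in part~(3) (condition on the sibling count $X_{n-\ell+1}$, then integrate out the $\ell-1$ spine steps below $v_{n-\ell+1}$) gives $\IE[\wt V_n^{(\ell)}\mid X_{n-\ell+1}]\le cX_{n-\ell+1}\ell^{-d/2}$, hence $\IE[\wt V_n^{(\ell)}]\le c\sigma^2\ell^{-d/2}$ since $\IE[X_{n-\ell+1}]=\IE[X^s-1]=\sigma^2$. The key point is that the joint law of $(\wt V_n^{(1)},\dots,\wt V_n^{(L)})$ does not depend on $n$ for $n\ge L$: each $\wt V_n^{(\ell)}$ is a fixed function of $X_{n-\ell+1}$, of the Galton--Watson subtrees off those siblings truncated at depth $\ell-1$, and of the lazy-walk steps along the $\ell-1$ spine edges below $v_{n-\ell+1}$ and the corresponding sibling edges, all built from i.i.d.\ ingredients whose law is shift-invariant in the generation index. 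Hence there is a sequence $(\wt V^{(\ell)})_{\ell\ge1}$ with $(\wt V_n^{(1)},\dots,\wt V_n^{(L)})\eqd(\wt V^{(1)},\dots,\wt V^{(L)})$ for all $n\ge L$; since $d\ge3$ makes $\sum_\ell\ell^{-d/2}<\infty$, $V:=\sum_{\ell\ge1}\wt V^{(\ell)}$ converges in $L^1$, and on one probability space we may take $V_n=\sum_{\ell=1}^n\wt V^{(\ell)}$, so that $\IE\abs{V_n-V}=\sum_{\ell>n}\IE[\wt V^{(\ell)}]\le c\sigma^2\sum_{\ell>n}\ell^{-d/2}\le cn^{1-d/2}$. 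As $V_n,V$ are $\IZ_{\ge0}$-valued, $\abs{\wh\IP(V_n=j-1)-\wh\IP(V=j-1)}\le\wh\IP(V_n\ne V)\le\IE\abs{V_n-V}\le cn^{1-d/2}$, and setting $\kappa_j:=j^{-1}\wh\IP(V=j-1)$ finishes part~(1).

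\emph{Main obstacle.} The delicate step is the stationarity-in-$n$ claim in part~(1): checking that the portion of the size-biased tree near generation $n$ --- the last $L$ spine vertices, their siblings, the truncated subtrees, and the relevant displacements --- has a law independent of $n$, and that each $\wt V_n^{(\ell)}$ is measurable with respect to depth-$(\ell-1)$ data only. This is precisely what licenses the exact coupling and reduces the error to the tail $\sum_{\ell>n}\ell^{-d/2}\asymp n^{1-d/2}$. Parts~(2) and (3) are routine, the substantive inputs being $\IE[Z_n^2]=1+n\sigma^2$, the local CLT for the lazy walk, and $\IP(Z_n>0)\asymp1/n$ from Lemma~\ref{lem:kolesterr}.
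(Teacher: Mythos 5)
Your proposal is correct, and part (2) is verbatim the paper's argument. The difference is that for parts (1) and (3) the paper simply outsources the two substantive estimates to \cite{Lalley2011}: part (1) is obtained by citing the proof of their Proposition~21, which gives $\abs{\IE[M_{n+1}(j)]-\IE[M_n(j)]}\leq cn^{-d/2}$, so that the sequence is Cauchy and the error is the tail sum $\sum_{i\geq n}i^{-d/2}\leq cn^{1-d/2}$; and part (3) is obtained by writing $\IE[Y_{n;m}\,|\,Z_n>0]\leq \IE[Y_{n;m}]/\IP(Z_n>0)$ and citing their Corollary~20 for $\IE[Y_{n;m}]\leq c(n-m)^{-d/2}\IE[Z_m^2]$. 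Your treatment of (3) — bounding $Y_{n;m}$ by coinciding pairs with distinct generation-$m$ ancestors, applying many-to-one in each subtree, and using symmetry plus the local CLT for the aperiodic lazy walk to get $p^{(2(n-m))}(y_b-y_a)\leq c(n-m)^{-d/2}$ — is exactly a proof of that cited corollary, so it is the same argument made self-contained. Your treatment of (1) is genuinely different in structure: instead of a telescoping bound between consecutive generations, you use the spine identity $j\,\IE[M_n(j)]=\wh\IP(V_n=j-1)$, decompose $V_n$ by the layer at which a particle leaves the spine, and exploit the $n$-independence of the law of the top $L$ layers to build an exact coupling with a limit variable $V$, getting the error as the tail $\sum_{\ell>n}\ell^{-d/2}$. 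This buys a construction of $\kappa_j$ as $j^{-1}\wh\IP(V=j-1)$ with a probabilistic interpretation, at the cost of having to verify the shift-invariance/measurability bookkeeping you flag; that verification is sound as you describe it (all ingredients — size-biased sibling counts, truncated GW subtrees, and walk increments — are i.i.d.\ in the generation index, and $\wt V_n^{(\ell)}$ depends only on depth-$\ell$ data from the top). Both routes give the same rate, and your constants are consistent with the stated bounds.
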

\begin{proof}[Proof of Lemma \ref{l1}] 	 Part 1 follows from the proof of  \cite[Proposition 21]{Lalley2011}, where they obtain
	$$|\IE[M_{n+1}(j)]-\IE[M_{n}(j)] |\leq c n^{-d/2},$$ which means $\IE[M_{n+1}(j)], n=1,2,\ldots$ has a limit and the bound follows since 
	\be{
	\sum_{i=n}^\infty i^{-d/2} \leq c n^{1-d/2}.
	}
Part 2 follows from $M_n(j)\leq Z_n, \IE[Z_n]=1$ and $ \Var(Z_n)=n\sigma^2$ to get $\Var(M_n(j))\leq \IE[Z_n^2]\leq 1+n\sigma^2$. To prove Part 3, we have
		\be{
		\IE[Y_{n;m}|Z_n>0] = \frac{\IE\bcls{Y_{n;m}\Ind{Z_n>0}}}{\IP(Z_n>0)}\leq  \frac{\IE[Y_{n;m}] }{\IP(Z_n>0)}\leq c n (n-m)^{-d/2} \IE[Z_m^2],
		}
		where in the last inequality we have used Lemma~\ref{lem:kolesterr} and \cite[Corollary 20]{Lalley2011}. The lemma follows after noting that $\Var(Z_m)=m \sigma^2$ and $\IE[Z_m]=1$.
\end{proof}
We give a construction of the critical BRW, building from the size-biased tree construction.

\smallskip\noindent{\bf BRW construction.}
To construct $M_n(j)$ conditional on $Z_n>0$, 
first generate $T_n$ from the size-bias tree section above. Since this tree is distributed as the Galton-Watson tree given $Z_n>0$, we  construct the conditional BRW by attaching a random direction to each offspring, chosen uniformly and independently from the $2d+1$ available directions for the nearest-neighbor random walk.
It is obvious that this ``modified'' BRW process has the same distribution as the original conditioned on non-extinction to generation~$n$.  
For the modified process, let $\hat Z_k$ denote the size of generation $k$ and $\hat M_n(j)$ be the number of multiplicity $j$ sites in generation $n$, then, in particular, we have
\be{
\law\bclr{Z_m, Z_n, (M_n(j))_{j=1}^r|Z_n>0}=\law\bclr{\hat Z_m, \hat Z_n,(\hat M_n(j))_{j=1}^r}.
}

\smallskip\noindent{\bf Modified BRW construction.}
A key to our approach is the following lemma that shows the cost of replacing $\hat M_n(i)$ by a sum of a random sum of conditionally independent variables. Given $\hat Z_m$, for $i=2,\ldots,\hat Z_m$, let $Z_{n,m}^i$ be the number of generation~$n$ offspring of the $i$th particle in generation $m$ of the modified BRW construction; here the labelling is left to right (so particle $1$ is always the marked particle), and note these are distributed as the sizes of the $(n-m)$th generations of  i.i.d.\ Galton-Watson trees with offspring distribution~$\law(X)$. Let also $M_{n,m}^i(j)$ be the number of sites having exactly~$j$ generation-$n$ descendants from the generation~$m$ particle labeled~$i$ in the critical BRW construction above, where the counts ignore particles descended from other generation $m$ particles at those sites. Also let $(Z^1_{n,m},M_{n,m}^1(j)$ be an independent copy of $(Z_{n-m},M_{n-m}(j))$. Note that given $\hat Z_m$, $M_{n,m}^1(j), \ldots, M_{n,m}^{\hat Z_m}(j)$ are i.i.d.

\begin{lemma}\label{lem:indwassbd}
For the variables described above and $m<n$,
\be{
\IE\bbbabs{\hat M_n(j)-\sum_{i=1}^{\hat Z_m} M_{n,m}^i(j) }\leq 
c\bclr{ n m (n-m)^{-d/2}+ (n-m)}.
}
\end{lemma}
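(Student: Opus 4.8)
The plan is to decompose the difference $\hat M_n(j)-\sum_{i=1}^{\hat Z_m} M_{n,m}^i(j)$ according to the ancestor structure at generation~$m$, and then control the two sources of error separately: sites whose generation-$n$ particles come from more than one generation-$m$ ancestor, and the conditioning mismatch on the marked (spine) lineage. First I would observe that $\sum_{i=1}^{\hat Z_m} M_{n,m}^i(j)$ counts, for each generation-$m$ particle, the number of sites that have exactly $j$ generation-$n$ descendants from \emph{that} particle alone (ignoring contributions from other generation-$m$ particles). Meanwhile $\hat M_n(j)$ counts sites with exactly $j$ generation-$n$ particles in total. A site contributes differently to the two counts precisely when it receives generation-$n$ particles from at least two distinct generation-$m$ ancestors. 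On such a site, it may be counted in one expression but not the other, but in any case the discrepancy it creates is at most the number of generation-$n$ particles sitting on it. Summing over all such "shared'' sites, the total discrepancy coming from this effect is bounded by $Y_{n;m}$, the number of generation-$n$ particles that share a site with a particle having a different generation-$m$ ancestor, as defined in Lemma~\ref{l1}. Taking expectations and invoking Lemma~\ref{l1}(3) gives a contribution of order $n m (n-m)^{-d/2}$ (dropping the lower-order $n(n-m)^{-d/2}$ term into the constant, using $m\geq 1$), which is the first term of the claimed bound.

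The second issue is that the marked particle $i=1$ in the modified BRW construction does not have a plain Galton-Watson subtree attached: its descendants follow the size-biased/conditioned structure built into $T_n$, whereas in $\sum_i M_{n,m}^i(j)$ we defined $(Z^1_{n,m},M^1_{n,m}(j))$ to be an independent copy of the \emph{unconditioned} $(Z_{n-m},M_{n-m}(j))$. So I need to bound the expected difference between the occupancy count along the true (conditioned) spine subtree and an independent unconditioned copy. The natural route is a coupling: by Lemma~\ref{lem:keyests}(i) and the surrounding construction, the number of generation-$n$ descendants of the generation-$m$ marked particle differs from an unconditioned Galton-Watson generation-$(n-m)$ size only through the conditioning events $A_{n,j}$ versus $A_{m,j}$, and the relevant total error in the number of particles is controlled by the quantities estimated in Lemma~\ref{lem:keyests}(ii) and (iv), summing to $O(n-m)$ as in the proof of Lemma~\ref{l2}. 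Since the occupancy count $M^1_{n,m}(j)$ is always dominated by the relevant population size, the contribution of the spine mismatch to $\IE|\hat M_n(j)-\sum_i M^i_{n,m}(j)|$ is at most a constant times this particle-level discrepancy, giving the second term $c(n-m)$.

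The main obstacle I anticipate is making the first step rigorous: carefully showing that the per-site discrepancy between "exactly $j$ in total'' and "sum over ancestors of (exactly $j$ from that ancestor)'' is genuinely bounded by the number of particles on shared sites, uniformly over configurations, so that the aggregate error is at most $Y_{n;m}$. One has to check the edge cases — e.g.\ a site with $j$ particles all from one ancestor (contributes $1$ to both), a site with $j$ particles split across two ancestors (contributes $1$ to $\hat M_n(j)$ but possibly $0$ or $2$ to the sum), and a site with more than $j$ particles one of whose ancestors contributes exactly $j$ (contributes $0$ to $\hat M_n(j)$ but $1$ to the sum) — and verify that in every case the local error is at most the local particle count, hence bounded after summation by $Y_{n;m}$. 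Once this combinatorial bookkeeping is pinned down, the rest is assembling the two estimates via the triangle inequality and applying Lemma~\ref{l1}(3) and the Lemma~\ref{lem:keyests} estimates; I expect no further difficulty there.
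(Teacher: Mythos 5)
Your proposal matches the paper's proof: the paper likewise splits the error into (a) the shared-site discrepancy — sites whose generation-$n$ particles have at least two generation-$m$ ancestors, plus sites where some single ancestor contributes exactly $j$ particles out of $k>j$ — bounded by $Y_{n;m}$ and controlled via Lemma~\ref{l1}(3), and (b) the mismatch at the marked particle $i=1$, contributing $c(n-m)$. The only small imprecision is in (b): no coupling of the conditioned spine subtree to the unconditioned copy is needed (and Lemma~\ref{lem:keyests}(ii),(iv) are not the relevant estimates); one simply bounds the true spine contribution by the number of generation-$n$ descendants of the marked particle, whose mean is at most $1+\sum_{i=m+1}^{n}\IE[R'_{n,i}]\leq c(n-m)$ by Lemma~\ref{lem:keyests}(iii), and separately discards the independent copy $M^1_{n,m}(j)$, whose mean is $\IE[M_{n-m}(j)]\leq c$ by Lemma~\ref{l1}(1).
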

\begin{proof}
The differences between the two variables are (i) multiplicity $j$ sites with more than $1$ ancestor from generation $m$, (ii) multiplicity $k>j$ sites with exactly $j$ particles descended from some single generation $m$ particle, (iii) the number of multiplicity $j$ sites with only descendants of the first particle of generation $m$, and (iv) $M_{n,m}^1(j)$.
But (i) and (ii) together are bounded by $Y_{n;m}$ (from Lemma~\ref{l1}) and (iii) is bounded by $\sum_{i=m+1}^n  R'_{n,i}$. Thus, using  using Items~1 and~3 of Lemma~\ref{l1} and~\emph{(iii)} of Lemma~\ref{lem:keyests}, we have
\ba{
\IE\bbbabs{\hat M_n(j)-\sum_{i=1}^{\hat Z_m}M_{n,m}^i(j) }& \leq \IE[Y_{n;m}|Z_n>0]+\sum_{i=m+1}^n  \IE[R'_{n,i}]  + \IE[M_{n-m}(j)] \\
	&\leq c \bclr{n(1+m\sigma^2)(n-m)^{-d/2}+ (n-m)}. \qedhere
}
\end{proof}

Before proving Theorem~\ref{t1}, we state and prove a simple lemma.

\begin{lemma}
For any nonnegative random variable $Y$ on the same space as $(Z_j)_{0\leq j\leq n}$ and $m<n$, we have
\besn{\label{eq:diffcond}
\IE[&Y|Z_n>0] \leq \frac{cn}{m}\IE[Y|Z_m>0].
}
\end{lemma}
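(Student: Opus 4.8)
The plan is to bound $\IE[Y\Ind{Z_n>0}]$ above by $\IE[Y\Ind{Z_m>0}]$ and then divide by $\IP(Z_n>0)$, using Kolmogorov's estimate (Lemma~\ref{lem:kolesterr}) to control the ratio of the two survival probabilities. Since $Y\geq 0$ and $\{Z_n>0\}\subseteq\{Z_m>0\}$ for $m<n$ (the process cannot revive once extinct), we have $\IE[Y\Ind{Z_n>0}]\leq\IE[Y\Ind{Z_m>0}]$ deterministically, so
\be{
\IE[Y|Z_n>0]=\frac{\IE[Y\Ind{Z_n>0}]}{\IP(Z_n>0)}\leq\frac{\IE[Y\Ind{Z_m>0}]}{\IP(Z_n>0)}=\IE[Y|Z_m>0]\cdot\frac{\IP(Z_m>0)}{\IP(Z_n>0)}.
}

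It then remains to show $\IP(Z_m>0)/\IP(Z_n>0)\leq cn/m$. By Lemma~\ref{lem:kolesterr}, $\IP(Z_k>0)=\frac{2}{k\sigma^2}+\bigo(\log^2(k)/k^2)$, so for $k$ large, say $k\geq k_0$, we have $\IP(Z_k>0)\leq \frac{3}{k\sigma^2}$ and $\IP(Z_k>0)\geq\frac{1}{k\sigma^2}$; hence for $m\geq k_0$ (equivalently $n>m\geq k_0$), $\IP(Z_m>0)/\IP(Z_n>0)\leq 3n/m$. For the finitely many small values $m<k_0$, one uses that $\IP(Z_m>0)\leq 1$ and $\IP(Z_n>0)\geq\IP(Z_{k_0}>0)>0$ if $n\geq k_0$, while if $n<k_0$ then both indices are bounded and the ratio is at most a constant; in all cases the ratio is $\bigo(n/m)$ after absorbing the $m$-bounded range into the constant $c$ (recall $c$ may depend on $\law(X)$).

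I do not expect any genuine obstacle here; the only mild care needed is handling the small-index regime where the asymptotic in Lemma~\ref{lem:kolesterr} has not yet kicked in, which is dispatched by a crude bounding argument and absorbing the resulting constant. The key structural point — that conditioning on later survival only shrinks the unnormalized expectation of a nonnegative functional — makes the first display immediate, and the rest is purely the quantitative comparison of survival probabilities supplied by Kolmogorov's estimate.
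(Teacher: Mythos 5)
Your proof is correct and is essentially identical to the paper's: the same chain $\IE[Y\Ind{Z_n>0}]\leq\IE[Y\Ind{Z_m>0}]$ followed by the ratio bound $\IP(Z_m>0)/\IP(Z_n>0)\leq cn/m$ via Kolmogorov's estimate. The only difference is that you spell out the small-index regime, which the paper silently absorbs into the constant.
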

\begin{proof}
Using Kolmogorov's approximation,
\bes{
\IE[&Y|Z_n>0] \\ 
	&=\frac{\IE[Y1_{Z_n>0}]}{\IP(Z_n>0)}\leq\frac{\IE[Y1_{Z_m>0}]}{\IP(Z_n>0)}=\IE[Y|Z_m>0] \frac{\IP(Z_m>0)}{\IP(Z_n>0)}\leq \frac{cn}{m}\IE[Y|Z_m>0].
}
\end{proof}
\begin{proof}[Proof of Theorem \ref{t1}]
	Fix $n\geq 4$ and $$1\leq m:=\bfloor{ n-n^{3/(d+1)}}=\bfloor{ n\bclr{1-n^{-(d-2)/(d+1) }}}<n.$$
The triangle inequality implies
\ban{
\done&\left(\law\left(\frac{Z_n}{n}, \frac{M_n(1)}{n},\ldots , \frac{M_n(r)}{n}\Big|Z_n>0\right), \law\bclr{(1,\kappa_1,\ldots,\kappa_r) Z}\right) \notag\\
&\leq
\done\left(\law\left(\frac{Z_n}{n}, \frac{M_n(1)}{n},\ldots , \frac{M_n(r)}{n}\Big|Z_n>0\right),\law\left((1,\kappa_1,\ldots,\kappa_r) \frac{Z_m}{m}\Big|Z_n>0\right)\right) \label{eq:11} \\
& \ \ \  +
\done\left(\law\left((1,\kappa_1,\ldots,\kappa_r) \frac{Z_m}{m}\Big|Z_n>0\right), \law\bclr{(1,\kappa_1,\ldots,\kappa_r) Z}\right). \label{eq:11a} 
}
Using Lemma \ref{l2} and Theorem~\ref{t2}, we find~\eq{eq:11a} is upper bounded by $r c (n-m)/m=\bigo(n^{\frac{2-d}{d+1}})$.

From the coupling definition of the Wasserstein metric,~\eq{eq:11} is upper bounded by
\be{
\IE\left|\frac{\hat Z_n}{n}-\frac{\hat Z_m}{m}\right|+\sum_{j=1}^r \IE\left|\frac{\hat M_n(j)}{n}-\frac{\kappa_j\hat Z_m}{m}\right|\leq b_{1}+b_{2}+\sum_{j=1}^r (e_{1,j}+e_{2,j}),
}
where the hat-couplings are those in the BRW description above and
\ba{
	b_1&=\frac{1}{n}\IE\left|\hat Z_n-\hat Z_m\right|, \\
	b_2&=\frac{n-m}{nm} \IE[\hat Z_m], \\
	e_{1,j}&= \frac{1}{n} \IE\left|\hat M_n(j)-\sum_{i=1}^{\hat Z_m}M_{n,m}^i(j)\right|, \\
e_{2,j} 	&= \IE\left|\frac{1}{n}\sum_{i=1}^{\hat Z_m}M_{n,m}^i(j) -\frac{\kappa_j\hat Z_m}{m}\right|. 
}
For $b_1$, we have that
\be{
\left|\hat Z_n-\hat Z_m\right|\leq \sum_{i=m+1}^n R'_{n,i} + \left| \sum_{i=2}^{\hat Z_m} \bclr{Z_{n,m}^i-1}\right|,
}
where recall the $Z_{n,m}^i$ are the number of generation~$n$ offspring of the $i$th particle in generation $m$ of the modified BRW construction, which are distributed as the sizes of the $(n-m)$th generations of  i.i.d.\ Galton-Watson trees with offspring distribution~$\law(X)$.
 Thus, using $(iii)$ of Lemma~\ref{lem:keyests}, conditioning on $\hat Z_m$, and using the fact that $\IE[Z_{n,m}^i]=1$, $\Var(Z_{n,m}^i)=(n-m)\sigma^2$, Cauchy-Schwarz, and then Jensen's inequality, we have
\be{
b_1\leq \frac{c}{n} \bbclr{n-m+ \IE\bbcls{\sqrt{(n-m)\sigma^2 \hat Z_{m}}}}\leq 
	 \frac{c}{n} \bbclr{n-m+\sqrt{(n-m)\sigma^2} \sqrt{\IE[\hat Z_{m}]}}\leq c n^{\frac{2-d}{2(d+1)}}.
}
Now, using~\eq{eq:diffcond} which says $\IE[\hat Z_m]=\IE[Z_m| Z_n>0]\leq c n$, we find $b_2=\bigo(n^{\frac{2-d}{d+1}})$. And Lemma~\ref{lem:indwassbd} implies that~$e_{1,j}=\bigo(n^{\frac{2-d}{2(d+1)}})$.

For $e_{2,j}$, recall that conditional on $\hat Z_m$,
$M_{n,m}^1(j), M_{n,m}^2(j), \ldots$ are i.i.d.\ random variables all having the same distribution as $M_{n-m}(j)$. 
Thus, letting $Z_m$ be distributed as the size of the $m$th generation in a Galton-Watson tree with offspring distribution $X$ that is independent of the $M_{n,m}^i(j)$, we find
\bes{
\frac{m}{cn} e_{2,j} 
	&\leq 
	 \IE\bbbclc{\IE\bbcls{\babs{n^{-1}\sum_{i=1}^{Z_m}M_{n,m}^i(j)-\kappa_jZ_m/m} \Big| Z_m, Z_m>0}\Big|Z_m>0}\\
	&\leq \IE\bbbclc{\IE\bbcls{\bclr{n^{-1}\sum_{i=1}^{Z_m}M_{n,m}^i(j)-\kappa_jZ_m/m}^2\big|Z_m, Z_m>0}^{1/2}\Big|Z_m>0} \\
	&= \IE\bbbclc{\bbcls{n^{-2} Z_m\Var(M_{n-m}(j))+\bclr{n^{-1}Z_m\IE[M_{n-m}(j)] -\kappa_jZ_m/m}^2}^{1/2}\Big|Z_m>0} \\
	&\leq \IE\bbcls{n^{-1} \sqrt{Z_m}\Var(M_{n-m}(j))^{1/2}
			+n^{-1}Z_m\babs{\IE[M_{n-m}(j)] -n\kappa_j/m}\big|Z_m>0}\\
	&\leq n^{-1}\IE\bcls{\sqrt{Z_m}|Z_n>0}\Var(M_{n-m}(j))^{1/2}\\ & \ \ \  +n^{-1}\IE[Z_m|Z_n>0]\bclr{|\IE[ M_{n-m}(j)] -\kappa_j|+  \kappa_j(n-m)/m} \\
	 & \leq c n^{-\frac{d-2}{2(d+1)}},}
where the last line follows by $\IE[\sqrt{Z_m}|Z_n>0]\leq \sqrt{\IE[Z_m|Z_n>0]} \leq cn^{1/2}$--using~\eq{eq:diffcond}--and Parts~1 and~2 of Lemma \ref{l1}, which imply
\[
\Var(M_{n-m}(j))^{1/2}\leq c\sqrt{n-m}\leq c n^{1.5/(d+1)}
\mbox{\, and  \,} |\IE[M_{n-m}(j)]-\kappa_j|\leq c (n-m)^{1-d/2}. \qedhere
\]
\end{proof}

\subsection{Laplace distribution approximation}\label{sec:lapl}

The centered multivariate symmetric Laplace distribution is a cousin to the Gaussian distribution that arises in a number of contexts and applications; see \cite{Kotz2001} for a book length treatment of this distribution. The $r$-dimensional distribution is denoted $\SL_r(\Sigma)$, where the parameter $\Sigma$ is an $r\times r$ positive definite matrix. 
In general, its law is the same as that of $\sqrt{E} \bZ$, where $E\sim \Exp(1)$ and $\bZ$ is a centered multivariate normal vector with covariance matrix $\Sigma$. The covariance matrix of $\SL_r(\Sigma)$ is $\Sigma$, which can thus be thought of as a scaling parameter. The characteristic function is evidently 
\be{
\bu\mapsto \frac{1}{1+ \frac{1}{2}\bu^\top \Sigma \bu},
} 
and from this it's easy to see a number of 
equivalent characterizations in the case $r=1$: if $E_1, E_2$ are independent and $\Exp(1)$, then $E_1-E_2\sim \SL_1(2)$; if $B\sim\Ber(1/2)$ independent of $E_1,E_2$, then $B E_1-(1-B) E_2\sim\SL_1(2)$. The $1$-dimensional density of $\SL_1(2)$ is $\frac{1}{2}e^{- \abs{x}}$ and the multivariate density is given in \cite[(5.2.2)]{Kotz2001} in terms of modified Bessel functions of the 3rd kind.

The symmetric Laplace distribution arises as the limit of a geometric sum. More precisely, we have the  following theorem, which is elementary, using, for example, characteristic functions.
\begin{theorem}\label{thm:renyi}
Let $N_p\sim \Geo(p)$ be independent of $\bX_1,\bX_2,\ldots$, which are i.i.d.\ $r$-dimensional random vectors having mean zero and covariance matrix $\Sigma$. 
Then as $p\to0$,
\be{
p^{1/2} \sum_{i=1}^{N_p} \bX_i \stackrel{d}{\longrightarrow} \SL_r(\Sigma).
}
\end{theorem}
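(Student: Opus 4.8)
The plan is to prove this via characteristic functions, exactly as the statement suggests (``which is elementary, using, for example, characteristic functions''). Write $\phi(\bu) = \IE[e^{\im \bu^\top \bX_1}]$ for the common characteristic function of the summands. Since $N_p$ is independent of the $\bX_i$ and $\IP(N_p = k) = p(1-p)^{k-1}$ for $k \geq 1$ (or $k\geq 0$, depending on convention; this only shifts by one summand and does not affect the limit), conditioning on $N_p$ and summing the resulting geometric series gives the exact formula
\be{
\IE\bbbcls{\exp\bbclr{\im\, \bu^\top\, p^{1/2}\sum_{i=1}^{N_p}\bX_i}} = \frac{p\,\phi(p^{1/2}\bu)}{1 - (1-p)\phi(p^{1/2}\bu)},
}
valid for all $\bu$ such that the denominator does not vanish. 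The whole proof is then the computation of the $p\to0$ limit of the right-hand side for each fixed $\bu\in\IR^r$, followed by an appeal to the Lévy continuity theorem.

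For the limit, I would Taylor-expand $\phi$ near the origin. The hypotheses give $\IE[\bX_1]=\bzero$ and $\Cov(\bX_1)=\Sigma$, so $\phi$ is twice differentiable at $\bzero$ with $\phi(\bv) = 1 - \tfrac12 \bv^\top\Sigma\bv + \lito(\norm{\bv}^2)$ as $\bv\to\bzero$. Substituting $\bv = p^{1/2}\bu$, which tends to $\bzero$ as $p\to0$ with $\bu$ fixed, yields $\phi(p^{1/2}\bu) = 1 - \tfrac{p}{2}\bu^\top\Sigma\bu + \lito(p)$. Hence the numerator is $p(1 + \lito(1)) \sim p$, while the denominator is
\be{
1 - (1-p)\bbclr{1 - \tfrac{p}{2}\bu^\top\Sigma\bu + \lito(p)} = p + \tfrac{p}{2}\bu^\top\Sigma\bu + \lito(p) = p\bbclr{1 + \tfrac12\bu^\top\Sigma\bu} + \lito(p).
}
Dividing, the factor of $p$ cancels and the expression converges to $\bclr{1 + \tfrac12\bu^\top\Sigma\bu}^{-1}$, which is precisely the characteristic function of $\SL_r(\Sigma)$ recorded earlier in the excerpt.

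Since this limiting function is continuous at $\bzero$ (indeed everywhere, being the characteristic function of a genuine probability distribution as noted above via the representation $\sqrt{E}\bZ$), the Lévy–Cramér continuity theorem gives convergence in distribution of $p^{1/2}\sum_{i=1}^{N_p}\bX_i$ to $\SL_r(\Sigma)$, completing the proof. The only points requiring a word of care — and I would not belabor them — are (i) that for $p$ small enough and each fixed $\bu$ the denominator is bounded away from $0$ so the ratio formula is legitimate (this follows because $\abs{\phi}\leq 1$ forces $\abs{1-(1-p)\phi(p^{1/2}\bu)} \geq 1 - (1-p) = p > 0$), and (ii) the exact indexing convention for $\Geo(p)$; neither is a genuine obstacle. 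There is essentially no hard step here: the result is soft, and the entire content is the short Taylor expansion above — consistent with the excerpt's description of it as elementary. (The substantive work of the paper is rather the quantitative, Wasserstein-metric version, Theorem~\ref{thm:mvrenyi}, referenced but not yet stated.)
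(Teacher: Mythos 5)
Your proof is correct and is precisely the characteristic-function argument the paper alludes to (the paper states Theorem~\ref{thm:renyi} without writing out a proof, remarking only that it is elementary via characteristic functions). The geometric-sum identity, the Taylor expansion of $\phi$ using the finite second moments, the lower bound $\abs{1-(1-p)\phi(p^{1/2}\bu)}\geq p$, and the appeal to L\'evy continuity are all in order.
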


Here we provide a rate of convergence to a generalization of Theorem~\ref{thm:renyi} in a metric amenable to our setting; see also \cite{Pike2014} for a related result when $r=1$. 

\begin{theorem}\label{thm:mvrenyi}
Let $M\geq1$ be a random variable with mean $\mu>1$, independent of $\bX_1,\bX_2,\ldots$, which are i.i.d.\ $r$-dimensional random vectors with zero mean, covariance matrix $\Sigma=(\Sigma_{ij})$, and finite third moments. Then there is a constant $C_r$ depending only on~$r$ such that
\bes{
\done\bbclr{\law\bbclr{\mu^{-1/2}& \sum_{i=1}^{M} \bX_i},  \SL_r(\Sigma)} \\
	&\leq  \mu^{-1/2}\bbbclr{C_r \mu^{1/3}  \IE\bcls{ \norm{\bX_1}_1^3}^{1/3} +\bbclr{\sum_{i=1}^r \sqrt{\Sigma_{ii}}}  \done\bclr{\law(M),\Geo(\mu^{-1})}+ 3.5}.
}
\end{theorem}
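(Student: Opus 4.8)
The plan is to prove Theorem~\ref{thm:mvrenyi} by interpolating between $\law(\mu^{-1/2}\sum_{i=1}^M \bX_i)$ and $\SL_r(\Sigma)$ through the intermediate law $\law(\mu^{-1/2}\sum_{i=1}^{N}\bX_i)$ where $N\sim\Geo(\mu^{-1})$ is independent of the $\bX_i$, and applying the triangle inequality for $\done$. The first term of the triangle inequality, comparing the $M$-sum to the $N$-sum, is handled by the coupling characterization of $\done$: couple $M$ and $N$ optimally so that $\IE\abs{M-N}=\done(\law(M),\Geo(\mu^{-1}))$, and then bound $\IE\norm{\mu^{-1/2}\sum_{i=1}^M\bX_i - \mu^{-1/2}\sum_{i=1}^N\bX_i}_1$ by conditioning on $(M,N)$; since the $\bX_i$ are i.i.d.\ mean-zero with $\IE[X_i^{(k)2}]=\Sigma_{kk}$, Wald's identity / a second-moment bound gives $\IE\abs{\sum_{i=N+1}^M X_i^{(k)}}\leq \sqrt{\Sigma_{kk}\,\IE\abs{M-N}}\leq\sqrt{\Sigma_{kk}}\sqrt{\done(\law(M),\Geo(\mu^{-1}))}$, but since we need a linear (not square-root) dependence, instead use the sharper bound $\IE\abs{\sum_{i=N+1}^M X_i^{(k)}}\leq (\text{something})\cdot\IE\abs{M-N}$. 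Actually the clean way is: conditionally on $\{N=n,M=m\}$ with $m\geq n$, $\IE\abs{\sum_{i=n+1}^m X_i^{(k)}}\leq \sqrt{(m-n)\Sigma_{kk}}$, and then by concavity $\IE\sqrt{\abs{M-N}}\leq\sqrt{\IE\abs{M-N}}$; the linear form in the statement must therefore come from a different, smarter coupling or from the geometric structure — I would instead exploit that the geometric sum can be built by thinning, so that the discrepancy contributes only through the $\abs{M-N}$ "extra" summands whose partial sums are bounded in $L^1$ by a constant times $\abs{M-N}$ once one notes $\IE\abs{X^{(k)}}\leq\sqrt{\Sigma_{kk}}$. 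This gives the middle term $\mu^{-1/2}(\sum_k\sqrt{\Sigma_{kk}})\done(\law(M),\Geo(\mu^{-1}))$.

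The second term, comparing $\law(\mu^{-1/2}\sum_{i=1}^{N}\bX_i)$ with $N\sim\Geo(\mu^{-1})$ to the exact Laplace law $\SL_r(\Sigma)$, is the core of the argument and I expect it to be the main obstacle. Here I would use the representation $\SL_r(\Sigma)=\law(\sqrt{E}\bZ)$ with $E\sim\Exp(1)$, $\bZ\sim\MVN(\bzero,\Sigma)$, together with the fact that a $\Geo(p)$ sum, after scaling, is a random sum over roughly $\mathrm{Exp}(1)$-many (in units of $\mu$) blocks. Concretely, write $p=\mu^{-1}$ and recall $p N \convd E\sim\Exp(1)$; condition on $N$ and apply a multivariate Berry–Esseen / Wasserstein CLT bound (the auxiliary normal-approximation results promised in Section~\ref{sec:CLT}) to compare $\law(\mu^{-1/2}\sum_{i=1}^N\bX_i\mid N)$ to $\MVN(\bzero,(N/\mu)\Sigma)$, incurring an error of order $C_r\IE[\norm{\bX_1}_1^3]/\sqrt{N}$ on the event $N$ is not too small, contributing $C_r\mu^{-1/2}\mu^{1/3}\IE[\norm{\bX_1}_1^3]^{1/3}$ after taking expectations over $N$ (using $\IE[N^{-1/2}\wedge 1]\asymp$ something controlled by $\mu^{-1/2}$, and Hölder to get the $1/3$ exponent on the third moment as stated). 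Then one is left comparing $\law(\sqrt{N/\mu}\,\bZ)$ to $\law(\sqrt{E}\,\bZ)$, which by conditioning on $\bZ$ and the $1$-Lipschitz/coupling property of $\done$ reduces to $\IE\norm{(\sqrt{N/\mu}-\sqrt{E})\bZ}_1 = (\sum_k\sqrt{\Sigma_{kk}}\IE|Z^{(k)}|/\sqrt{\Sigma_{kk}}\cdots)\IE|\sqrt{N/\mu}-\sqrt{E}|$ — but $N/\mu$ is exactly geometric scaled and $\sqrt{N/\mu}-\sqrt E$ can be coupled to have small $L^1$ norm, of order $\mu^{-1/2}$, via the standard coupling of $\Geo(p)$ with $\Exp(1)$ after scaling by $p$; the constant $3.5$ in the statement absorbs these and the CLT constants.

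Assembling: the final bound is the sum of the three contributions — the normal-approximation error $C_r\mu^{-1/2}\mu^{1/3}\IE[\norm{\bX_1}_1^3]^{1/3}$, the geometric-vs-$M$ coupling error $\mu^{-1/2}(\sum_k\sqrt{\Sigma_{kk}})\done(\law(M),\Geo(\mu^{-1}))$, and the residual $3.5\mu^{-1/2}$ coming from the $\sqrt{N/\mu}$-vs-$\sqrt{E}$ comparison plus lower-order terms when $N$ is small. The delicate points to get right are (a) the exponent $1/3$ on the third moment, which I would obtain by splitting on whether $N\geq \mu^{1/3}$ or not and Hölder's inequality, handling the small-$N$ contribution crudely by boundedness of the Lipschitz test functions; and (b) ensuring the $\done(\law(M),\Geo(\mu^{-1}))$ term enters linearly rather than with a square root, which forces the coupling between $M$ and a geometric variable to be made at the level of the summation index with the "surplus" summands contributing an $L^1$-controlled partial sum — this is where I expect to spend the most care. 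Everything else is routine once the multivariate Wasserstein CLT bound of Section~\ref{sec:CLT} and the explicit $\Geo$–$\Exp$ coupling are in place.
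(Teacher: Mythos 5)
Your overall architecture is sound and, for the middle term, genuinely different from the paper's. The paper's triangle inequality runs through $\law\bclr{\sqrt{\mu^{-1}M}\,\bZ}$ and $\law\bclr{\sqrt{\mu^{-1}N}\,\bZ}$: it Gaussianizes first (applying the smooth-function CLT conditionally on $M$ itself, with $\IE[M^{1/3}]\leq\mu^{1/3}$ by Jensen) and then obtains the linear dependence on $\done(\law(M),\Geo(\mu^{-1}))$ from the elementary inequality $\abs{\sqrt{M}-\sqrt{N}}\leq\abs{\sqrt{M}-\sqrt{N}}(\sqrt{M}+\sqrt{N})=\abs{M-N}$, valid because $M,N\geq1$. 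You instead interpolate through the geometric sum $\mu^{-1/2}\sum_{i=1}^N\bX_i$ and handle the $M$-versus-$N$ discrepancy at the level of the raw summands; your final fallback --- bounding the surplus partial sum in $L^1$ by $\IE\abs{M-N}\sum_k\IE\abs{X_1^{(k)}}\leq\IE\abs{M-N}\sum_k\sqrt{\Sigma_{kk}}$ without exploiting cancellation, under the optimal coupling of $M$ and $N$ --- is correct and delivers exactly the middle term with the same constant. Your instinct that the naive second-moment bound would only give $\sqrt{\IE\abs{M-N}}$ was right, and the crude triangle-inequality bound on the extra summands is the correct fix. The remaining comparisons (CLT conditionally on the random index, then $\sqrt{N/\mu}$ versus $\sqrt{E}$ via the standard quantization coupling of $\Geo$ and $\Exp$) coincide with the paper's.

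One point to tighten: the per-$n$ CLT error is not $C_r\IE\bcls{\norm{\bX_1}_1^3}/\sqrt{n}$. Since $\Sigma$ is not assumed nonsingular, the available Wasserstein bound (Corollary~\ref{cor:doneclt}) for $n^{-1/2}\sum_{i=1}^n\bX_i$ has the form $C_r\bclr{\IE\bcls{\norm{\bX_1}_1^3}/\sqrt{n}}^{1/3}$; after rescaling by $\sqrt{n/\mu}$ this becomes $C_r\,\mu^{-1/2}n^{1/3}\IE\bcls{\norm{\bX_1}_1^3}^{1/3}$, and a single application of Jensen, $\IE[N^{1/3}]\leq\mu^{1/3}$, finishes that term --- no splitting on the size of $N$ and no additional H\"older step is needed. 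The cube-root exponent on the third moment is inherited from the smoothing in the CLT itself, not manufactured downstream.
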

\begin{proof}
Let $E\sim \Exp(1), N\sim \Geo(\mu^{-1})$ and $\bZ=\bZ_\Sigma$ be a centered multivariate normal vector with covariance matrix $\Sigma$, with the three variables independent and independent of $M$ and the $\bX_i$. Then, since $\law(\sqrt{E} \bZ)=\SL_r(\Sigma)$,  the triangle inequality implies
\ban{
\done\bbclr{\law\bbclr{\mu^{-1/2} \sum_{i=1}^{M} \bX_i},  \SL_r(\Sigma)}
	&\leq \done\bbclr{\law\bbclr{\mu^{-1/2} \sum_{i=1}^{M} \bX_i}, \law\bclr{ \sqrt{\mu^{-1} M} \bZ}}  \label{eq:cltwass1} \\
	&\qquad +\done\bbclr{ \law\bclr{ \sqrt{\mu^{-1} M} \bZ}, \law\bclr{ \sqrt{\mu^{-1} N} \bZ} } \label{eq:mnwass2} \\
	&\qquad +\done\bbclr{\law\bclr{ \sqrt{\mu^{-1} N} \bZ},  \law\bclr{ \sqrt{E} \bZ}}. \label{eq:expgeowass3}
}
We use below that if $X,Y,Z$ are random elements defined on the same space, then
\ben{\label{eq:condwass}
\done\bclr{\law(X),\law(Y)}\leq \IE\bcls{ \done\bclr{\law(X|Z),\law(Y|Z)}},
}
which easily follows from the definition of Wasserstein distance given at~\eq{eq:wassdef}.

To bound~\eq{eq:cltwass1}, we use the smooth function CLT, Corollary~\ref{cor:doneclt} below, 
which says that there is a constant $C_r$ such that for all $m\geq1$, 
\be{
\done\bbclr{\law\bbclr{\mu^{-1/2} \sum_{i=1}^{m} \bX_i}, \law\bclr{ \sqrt{\mu^{-1} m} \bZ}} \leq C_r \mu^{-1/2} m^{1/3} \IE\bcls{ \norm{\bX_1}_1^3}^{1/3}.
}
Conditioning on $M$, applying~\eq{eq:condwass}, and using independence, we find that
\besn{\label{eq:cltwass1a}
\done\bbclr{\law\bbclr{\mu^{-1/2} \sum_{i=1}^{M} \bX_i}, \law\bclr{ \sqrt{\mu^{-1} M} \bZ}} 	
	&\leq C_r \mu^{-1/2} \IE\bcls{ M^{1/3} }\IE\bcls{ \norm{\bX_1}_1^3}^{1/3} \\
	& \leq C_r \mu^{-1/2} \mu^{1/3} \IE\bcls{ \norm{\bX_1}_1^3}^{1/3},
}
where we have used Jensen's inequality in the last line.

To bound~\eq{eq:mnwass2}, we use~\eq{eq:condwass} and the general fact that for fixed $\bz\in\IR^d$, and random variables~$X,Y$,
\ben{\label{eq:mvarscalewass}
\done\bclr{\law(X\bz),\law(Y\bz)}\leq \norm{\bz}_1 \done\bclr{\law(X),\law(Y)}.
}
Now use the dual definition of Wasserstein distance, see, for example \cite{Rachev1998} or \cite{Villani2009}, to choose a coupling between $M$ and $N$ such that $\done(\law(M),\law(N))=\IE\bcls{\abs{M-N}}$. Using that $M,N\geq 1$, we have
\bes{
  \done\bclr{\law\bclr{\sqrt{M}},\law\bclr{\sqrt{N}}}
  & \leq \IE\babs{\sqrt{M}-\sqrt{N}} \leq \IE\bcls{\babs{\sqrt{M}-\sqrt{N}}\bclr{\sqrt{M}+\sqrt{N}}} \\ 
  & =\IE\bcls{\abs{M-N}} = \done(\law(M),\law(N)).
}
Thus, conditioning on $\bZ$, using~\eq{eq:condwass} and independence, we have that~\eq{eq:mvarscalewass} implies
\besn{\label{eq:mnwass2a}
\done\bbclr{ \law\bclr{ \sqrt{\mu^{-1} M} \bZ}, \law\bclr{ \sqrt{\mu^{-1} N} \bZ} }
	 &\leq \IE\bcls{\norm{\bZ}_1}\mu^{-1/2}  \done\bclr{\law\bclr{\sqrt{M}},\law\bclr{\sqrt{N}}} \\
	&\leq \mu^{-1/2} \bbbclr{\sum_{i=1}^r \sqrt{\Sigma_{ii}}} \done\bclr{\law(M),\law(N)},
}
where the last inequality is because $\IE\bcls{|Z_i|}\leq \sqrt{\Sigma_{ii}}$.

To bound~\eq{eq:expgeowass3}, we again use~\eq{eq:condwass},~\eq{eq:mvarscalewass}, and independence, to find 
\be{
\done\bbclr{\law\bclr{ \sqrt{\mu^{-1} N} \bZ},  \law\bclr{ \sqrt{E} \bZ}}\leq \IE\bcls{\norm{\bZ}_1} 
\done\bbclr{\law\bclr{ \sqrt{\mu^{-1} N} },  \law\bclr{ \sqrt{E} }}.
}
To bound this last Wasserstein distance, we use the usual coupling of a geometric to an exponential. If $N'=\ceil{E/(-\log(1-\mu^{-1}))}$, then $\law(N)=\law(N')$, and 
\besn{\label{eq:w33}
\done\bbclr{&\law\bclr{ \sqrt{\mu^{-1} N} },  \law\bclr{ \sqrt{E} }} \\
&\leq\IE\bbabs{ \sqrt{\mu^{-1} N'}-\sqrt{E} } \\
&=\IE\bbbcls{\bbabs{ \sqrt{\mu^{-1} N'}-\sqrt{E} } \Ind{E<\mu^{-1}}}+\IE\bbbcls{\bbabs{ \sqrt{\mu^{-1} N'}-\sqrt{E} } \Ind{E>\mu^{-1}}}.
}
Since $\sqrt{\mu^{-1} N'}\geq \mu{-1}$, if  $E>\mu^{-1}$, then  
\be{
\bbabs{\sqrt{\mu^{-1} N'}-\sqrt{E}} \leq \frac{\sqrt{\mu}}{2} \babs{\mu^{-1} N'-E},
}
and so
\besn{\label{eq:t22}
\IE\bbbcls{\bbabs{ \sqrt{\mu^{-1} N'}-\sqrt{E} } \Ind{E>\mu^{-1}}}
	&\leq \frac{\sqrt{\mu}}{2} \IE\babs{\mu^{-1} N'-E} \\
	&\leq \frac{\sqrt{\mu}}{2} \IE\bbabs{\frac{E}{\mu\log(1-\mu)}+\mu^{-1}-E} \\
	&\leq \frac{\sqrt{\mu}}{2} \left(\bbabs{1+\frac{1}{\mu\log(1-\mu)}}+\mu^{-1}\right)\\
	&\leq \mu^{-1/2},
}
since $\abs{1+1/(\mu\log(1-\mu))}\leq 1$ for $\mu>1$. Finally,
\besn{\label{eq:s22}
\kern-0.5em\IE\bbbcls{\bbabs{ \sqrt{\mu^{-1} N'}-\sqrt{E} } \Ind{E<\mu^{-1}}}
	&\leq\left(\sqrt{\mu^{-1} \left(-\frac{1}{\mu\log(1-\mu)}+1\right) }+ \mu^{-1/2}\right) \IP(E<\mu^{-1}) \\
	&\leq \frac{\sqrt{2}+1}{\mu^{3/2}}\leq \frac{2.5}{\mu^{1/2}},
}
since $1-1/(\mu\log(1-\mu))\leq 2$, $\IP(E<\mu^{-1})\leq \mu^{-1}$, and $\mu>1$.
Combining~\eq{eq:w33},~\eq{eq:t22}, and~\eq{eq:s22} with~\eq{eq:mnwass2a} and~\eq{eq:cltwass1a} yields the result.
\end{proof}

\subsection{Proof of Theorem~\ref{t3}}

As in the proof of Theorem~\ref{t1}, we use Lemma~\ref{lem:indwassbd}
to move the problem to a random sum of i.i.d.\ random variables (with cost to the error term). 
Then we apply Theorem~\ref{thm:mvrenyi} to the random sum. 
Our first lemma gives some moment information for the summands, in particular, it shows 
the covariances converge, and the fourth moments are appropriately bounded.

Recall the notation and constructions for the BRW and modified BRW, and write $\mu_{n}(j):=\IE[M_{n}(j)]$.
\begin{lemma}\label{lem:14} 
Assume the hypotheses of Theorem~\ref{t3}, and for $j, k\in \{1,\ldots, r\}$, denote
\be{
A_n(j,k):=\Cov\bclr{ M_{n}(j)-\mu_{n}(j)Z_n, M_{n}(k)-\mu_{n}(k)Z_n}.
}
Then the limits 
\be{
  \lim_{n\toinf} A_n(j,k) =: \Sigma_{jk}
}
exist, are finite, and
\be{
  \abs{A_n(j,k) - \Sigma_{jk}}\leq cn^{2-d/3}.
}
Furthermore,
\ben{\label{eq:4momlap}
\lim_{n\to\infty}\frac{1}{n}\IE\bcls{\clr{M_{n}(j)-\mu_{n}(j)Z_n}^4}=3 \sigma^2 \Sigma_{jj}^2.
}
\end{lemma}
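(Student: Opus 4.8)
The plan is to prove Lemma~\ref{lem:14} in two stages: first establish the convergence of the covariances $A_n(j,k)$ and the rate $cn^{2-d/3}$, then derive the fourth-moment asymptotic~\eqref{eq:4momlap}.

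\textbf{Convergence of covariances.} The key idea is the same decomposition used throughout: for a well-chosen intermediate generation $m<n$, write $M_n(j)$ as $\sum_{i=1}^{Z_m} M_{n,m}^i(j)$ plus an error (controlled by Lemma~\ref{lem:indwassbd} in $L^1$, but here we need $L^2$-type control of the error, which should follow from the same kind of second-moment estimates on $Y_{n;m}$ and the $R'_{n,i}$ — this is where finiteness of the higher moments of $X$ assumed in Theorem~\ref{t3} enters). The point of subtracting $\mu_n(j) Z_n$ rather than $\mu_n(j)$ times a constant is that, conditionally on the tree up to generation $m$, the pair $(M_n(j)-\mu_n(j)Z_n)$ decomposes (approximately) into a sum over the $Z_m$ generation-$m$ particles of i.i.d.\ mean-zero contributions $M_{n,m}^i(j) - \mu_{n-m}(j) Z_{n,m}^i$, plus negligible cross-terms and the aforementioned error. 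Hence $A_n(j,k) \approx \IE[Z_m] \cdot \Cov(M_{n-m}(j) - \mu_{n-m}(j) Z_{n-m},\, M_{n-m}(k) - \mu_{n-m}(k) Z_{n-m}) = \IE[Z_m] A_{n-m}(j,k) = A_{n-m}(j,k)$ since $\IE[Z_m]=1$. Choosing, say, $m = \lfloor n^{1/3}\rfloor$ (or whatever exponent balances the error terms to give $n^{2-d/3}$), one gets $|A_n(j,k) - A_{n-m}(j,k)| \leq c\, a_n$ for a summable sequence $a_n$; specifically the dominant error contributions should be of order $nm(n-m)^{-d/2}$ and $(n-m)$ type terms (squared, or their square roots times standard deviations), which when optimized over $m$ yield the stated $n^{2-d/3}$. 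Telescoping then shows $A_n(j,k)$ is Cauchy, hence convergent to some finite $\Sigma_{jk}$, and summing the tail of $a_n$ gives the rate. One must also check the limiting matrix $(\Sigma_{jk})$ is nonnegative definite, which is immediate since each $A_n(j,k)$ is a covariance matrix and the cone of PSD matrices is closed.

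\textbf{The fourth moment.} Write $S_n(j) := M_n(j) - \mu_n(j) Z_n$. Using the same decomposition, $S_n(j) \approx \sum_{i=1}^{Z_m} \xi_i$ where, conditionally on $Z_m$, the $\xi_i = M_{n,m}^i(j) - \mu_{n-m}(j) Z_{n,m}^i$ are i.i.d.\ mean zero with variance $\approx A_{n-m}(j,j) \to \Sigma_{jj}$ and appropriately bounded fourth moment (again using the higher-moment hypothesis on $X$). For a random sum $\sum_{i=1}^{N}\xi_i$ of i.i.d.\ mean-zero summands with $N$ independent, the standard expansion gives $\IE[(\sum \xi_i)^4] = \IE[N]\IE[\xi_1^4] + 3\IE[N(N-1)](\IE[\xi_1^2])^2$. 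Here $N = Z_m$ has $\IE[Z_m]=1$ and $\IE[Z_m(Z_m-1)] = \Var(Z_m) = m\sigma^2 \to\infty$, so after dividing by $n$ the first term is $O(1/n)\to 0$ while the second term contributes $3\,(m\sigma^2/n)(\IE[\xi_1^2])^2$. To make this converge to $3\sigma^2\Sigma_{jj}^2$ one needs $m/n\to 1$, so in fact for this part one should take $m = m(n)$ with $n - m = o(n)$ but $n-m\to\infty$ (e.g.\ $m = n - \lfloor\sqrt n\rfloor$), so that $m\sigma^2/n \to \sigma^2$ and $\IE[\xi_1^2] = A_{n-m}(j,j) + o(1)\to\Sigma_{jj}$. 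The errors from replacing $S_n(j)$ by the random sum, and from the cross terms, must be shown to be $o(n)$ after taking fourth powers — this is the routine but slightly lengthy part, handled by Cauchy–Schwarz together with the moment bounds on $Y_{n;m}$, $\sum R'_{n,i}$, and the bound $\Var(M_\cdot(j))\leq 1 + \cdot\,\sigma^2$ from Lemma~\ref{l1}.

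\textbf{Main obstacle.} The hardest part is getting the error terms under control in the stronger ($L^2$ and $L^4$) norms rather than just $L^1$: Lemma~\ref{lem:indwassbd} only gives an $L^1$ bound, so I expect to need to revisit $Y_{n;m}$ and the spine contributions $R'_{n,i}$ and bound their second (resp.\ fourth) moments, which is precisely why Theorem~\ref{t3} assumes $\IE[X^{5+\lfloor 18/(d-6)\rfloor}]<\infty$ — the exponent there is presumably exactly what is needed to push the relevant moment estimates through while keeping the error $o(n)$ and the covariance rate at $n^{2-d/3}$, and this is also where the restriction $d\geq 7$ becomes essential (so that $(n-m)^{-d/2}$-type terms decay fast enough). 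A secondary subtlety is that the two parts of the lemma want different choices of the split point $m$ (one near $0$, one near $n$); this is fine since they are proved separately, but it should be stated clearly.
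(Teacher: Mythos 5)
Your route --- decompose at a growing intermediate generation $m$, treat $M_n(j)-\mu_n(j)Z_n$ as a random sum of $Z_m$ conditionally i.i.d.\ centred blocks, and apply Wald-type identities for the second and fourth moments of random sums --- is genuinely different from the paper's. The paper runs a \emph{one-step} recursion: decomposing over the first generation only, the branching terms cancel exactly (the $\sigma^2$ terms for the covariance; the $\gamma_3,\gamma_4$ terms for the fourth moment), leaving $A_n(j,k)=A_{n-1}(j,k)+\bigo(n^{1-d/3})$ and a fourth-moment increment of $3\sigma^2A_{n-1}(j,j)^2+\bigo(n^{-\delta'})$ per generation; telescoping then gives both claims, and the rate $n^{2-d/3}$ is just the tail sum of the increments. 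Your identity $\IE[(\sum_{i=1}^N\xi_i)^4]=\IE[N]\IE[\xi_1^4]+3\IE[N(N-1)](\IE[\xi_1^2])^2$ with $N=Z_m$ and $m/n\to1$ is a correct consistency check for the constant $3\sigma^2\Sigma_{jj}^2$ (it is essentially the Remark following the lemma in the paper), but as a proof it leaves the whole difficulty, the error control, untouched, and there your plan has concrete problems.

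First, you are importing the wrong error bounds. Both $A_n(j,k)$ and \eqref{eq:4momlap} are \emph{unconditional} moments, so the spine quantities $R'_{n,i}$ and Lemma~\ref{lem:indwassbd} (which concern the conditioned process $\hat M_n$) do not enter; the only decomposition error is the collision count $Y_{n;m}$. In particular, if an ``$(n-m)$-type term'' really appeared in $\abs{A_n(j,k)-A_{n-m}(j,k)}$, as you suggest, the covariance argument would fail outright, since $n-m\to\infty$. Second, the $L^2$/$L^4$ control of the collision error does not close at $d=7$ with your block lengths. The only available handle is the first-moment bound $\IE[Y_{n;m}]\leq c(1+m\sigma^2)(n-m)^{-d/2}$ combined with truncating $Z_n$ at $n^{1+\alpha}$ (this is how the paper bounds $\IE[Z_n^3Y_{n;1}]$); over a long block this yields roughly $\IE[Z_n^3Y_{n;m}]\leq c\bclr{n^{3-\beta\alpha}+n^{3(1+\alpha)}\,m\,(n-m)^{-d/2}}$, and with your choice $m=n-\lfloor\sqrt n\rfloor$ and $d=7$ the second term is of order $n^{9/4+3\alpha}$, nowhere near the required $\lito(n)$. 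The paper's choice of step length one keeps $\IE[Y_{n;1}]\leq cn^{1-d/2}$ per step, so the accumulated error over $n$ steps is $\lito(n)$; if you insist on a single block you must take $n-m$ very close to $n$ (a computation shows $n-m\gg n^{18/19}$ for $d=7$ under the stated moment assumption), re-optimize $\alpha,\beta$, and still verify $\IE[(\text{error})^4]$ and the cross terms separately. Finally, ``telescoping'' with a variable step $m=m(n)$ is not a literal telescoping and must be set up along a subsequence; the fixed one-step recursion avoids this entirely.
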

\begin{remark}
As a check on the limiting constant and linear growth of the fourth moment of $\clr{M_{n}(j)-\mu_{n}(j)Z_n}$ given
by~\eq{eq:4momlap}, Theorem~\ref{t3} suggests (assuming appropriate uniform integrability) that for $E\sim\Exp(1)$ and $Z_j\sim\rN(0,\wt \Sigma_{jj})$,
\ben{\label{eq:598}
n^{-2}\IE\bcls{\clr{M_{n}(j)-\mu_{n}(j)Z_n}^4 | Z_n>0} \to \IE\cls{E^2} \IE\cls{Z_{j}^4}=6 \tilde \Sigma_{jj}^2.
}
The left hand side of~\eq{eq:598} is equal to
\ben{\label{eq:599}
n^{-1}\frac{\IE\bcls{\clr{M_{n}(j)-\mu_{n}(j)Z_n}^4}}{n \IP( Z_n>0) }\sim n^{-1}\frac{\IE\bcls{\clr{M_{n}(j)-\mu_{n}(j)Z_n}^4}}{2/\sigma^2 }.
}
From~\eq{eq:r4} below, we see that $\wt \Sigma= (\sigma^2/2)\Sigma$, which, with~\eq{eq:598} and~\eq{eq:599}, 
agrees with~\eq{eq:4momlap}.
\end{remark}
\begin{proof}[Proof of Lemma~\ref{lem:14}]
Write
  \bes{
A_n(j,k)
	&=\IE[M_n(j)M_n(k)] - \mu_n(k) \IE[M_n(j)Z_n]- \mu_n(j) \IE[ M_n(k)Z_n]+\mu_n(j)\mu_n(k) \IE[ Z_n^2].
	}
Let $(M_{n-1}^i(j),M_{n-1}^i(k),Z_{n-1}^i)_{i=1}^{Z_1}$ be defined as follows: The first (respectively, second) coordinate is the number of sites with $j$ (respectively, $k$) particles in generation~$n$ descended from particle~$i$ in generation~$1$, and the third coordinate is the number of offspring in generation $n$ descended from generation~$1$. Note that, given~$Z_1$, these are i.i.d.\ copies of $(M_{n-1}(j), M_{n-1}(k), Z_{n-1})$. 
Now write 
\ba{
\IE[M_n(j)M_n(k)] &=\IE\bbbcls{\sum_{i,\ell=1}^{Z_1} M_{n-1}^i(j) M_{n-1}^\ell(k)}
	+\IE\bbbcls{M_n(j)M_n(k)-\sum_{i,\ell=1}^{Z_1} M_{n-1}^i(j) M_{n-1}^\ell(k)}  \\
	&=:\IE\bbbcls{\sum_{i,\ell=1}^{Z_1} M_{n-1}^i(j) M_{n-1}^\ell(k)} + e\sp{1}_{n}(j,k). 
}
The first term above is the main contribution, and the second term is a small error. For the first term,
\bes{
  \IE\bbbcls{\sum_{i,\ell=1}^{Z_1} M_{n-1}^i(j) M_{n-1}^\ell(k)}
  &=\IE\bbbcls{\sum_{i,\ell=1}^{Z_1} \IE\bcls{M_{n-1}^i(j) M_{n-1}^\ell(k)| Z_1}} \\
  &=\IE\bbbcls{\sum_{i=1}^{Z_1} \IE\bcls{M_{n-1}^i(j) M_{n-1}^i(k)| Z_1} + \sum_{i\neq j}^{Z_1} \IE\bcls{M_{n-1}^i(j) M_{n-1}^j(k)| Z_1}} \\
	&= \IE[Z_1]\IE\bcls{M_{n-1}^1(j) M_{n-1}^1(k)} + \IE[Z_1(Z_1-1)] \mu_{n-1}(j)\mu_{n-1}(k)\\[1ex]
	&= \IE\bcls{M_{n-1}^1(j) M_{n-1}^1(k)} + \sigma^2 \mu_{n-1}(j)\mu_{n-1}(k). 
}
Similarly,
\bes{
\IE[M_n(j)Z_n]&=\IE\bbbcls{\sum_{i,\ell=1}^{Z_1} M_{n-1}^i(j) Z_{n-1}^\ell}
	+\IE\bbbcls{Z_n\bbclr{M_n(j)-\sum_{i=1}^{Z_1} M_{n-1}^i(j)} }, \\
	&=:\IE\bbbcls{\sum_{i,\ell=1}^{Z_1} M_{n-1}^i(j) Z_{n-1}^\ell}
	+e\sp{2}_{n}(j),
}
and 
\be{
\IE\bbbcls{\sum_{i,\ell=1}^{Z_1} M_{n-1}^i(j) Z_{n-1}^\ell}
	=\IE\bcls{M_{n-1}^1(j) Z_{n-1}^1} + \sigma^2 \mu_{n-1}(j).
}
Finally,
\be{
\IE[Z_n^2] = \IE[( Z_{n-1})^2] + \sigma^2.
}
Using that $\IE[M_n(\ell) Z_n]\leq \IE[Z_n^2] \leq c n$ and, from the proof of  \cite[Proposition 21]{Lalley2011}, that $\abs{\mu_{n}(\ell)-\mu_{n-1}(\ell)}\leq n^{-d/2}$, we can collect the work above to find
\be{
\abs{A_{n}(j,k)-A_{n-1}(j,k)}\leq c\bclr{\babs{e_n\sp{1}(j,k)}+\babs{e_n\sp{2}(j)}+\babs{e_n\sp{2}(k)} + n^{1-d/2}}.
}
To bound the errors, first note
\ban{
\babs{e_n\sp{1}(j,k)}&\leq \IE\bbbcls{M_{n}(j)\bbabs{M_n(k)-\sum_{\ell=1}^{Z_1} M_{n-1}^\ell(k)}}+ \IE\bbbcls{\sum_{\ell=1}^{Z_1} M_{n-1}^\ell(k)\bbabs{M_n(j)-\sum_{i=1}^{Z_1} M_{n-1}^i(j)}}\notag\\
	&\leq  \IE\bbbcls{Z_n\bbabs{M_n(k)-\sum_{\ell=1}^{Z_1} M_{n-1}^\ell(k)}}+ \IE\bbbcls{Z_n\bbabs{M_n(j)-\sum_{i=1}^{Z_1} M_{n-1}^i(j)}} \label{eq:111} \\
	&\leq 2 \IE[ Z_n Y_{n;1}]. \notag
}
Similarly,
\bes{
\abs{e_n\sp{2}(j)}&\leq \IE\bbbcls{Z_n\bbabs{M_n(j)-\sum_{i=1}^{Z_1} M_{n-1}^i(j)}} \\
	&\leq  \IE[ Z_n Y_{n;1}],
}
and the same inequality holds with $\abs{e_n\sp{2}(k)}$ on the left hand side. 
For $\alpha>0$, to be chosen later, we bound
\besn{\label{eq:221}
\IE[Z_n Y_{n;1}]&\leq \IE\bcls{Z_n Y_{n;1} \Ind{Z_n > n^{1+\alpha}}} + n^{1+\alpha} \IE[Y_{n;1}]  \\
	&\leq \IE\bcls{Z_n^2 \Ind{Z_n > n^{1+\alpha}}} + c n^{1+\alpha-d/2},
}
where we have used \cite[Corollary 20]{Lalley2011}.
Now squaring both sides of the inequality in the indicator and bounding by the ratio, we have
\bes{
\IE\bcls{Z_n^2 \Ind{Z_n > n^{1+\alpha}}}
	&\leq \frac{\IE[Z_n^4]}{n^{2(1+\alpha)}} \\	
	&\leq c n^{1-2\alpha},
}
since $\IE[Z_n^4]= \bigo(n^3)$. Thus, choosing $\alpha=d/6$, we obtain $\IE[Z_n Y_{n;1}]\leq c n^{1-d/3}$, and thus
\be{
\abs{A_{n}(j,k)-A_{n-1}(j,k)}\leq cn^{1-d/3}.
}
Since $d\geq 7$, $\sum_{n\geq 1} n^{1-d/3}<\infty$, and therefore $(A_{n}(j,k))_{n\geq1}$ is a Cauchy sequence; denote its limit by $\Sigma_{jk}$, and observe that
\be{
\abs{A_{n}(j,k)-\Sigma_{jk}}\leq cn^{2-d/3}.
} 

To prove the second assertion, we fix~$j$ and drop it from the notation, e.g., writing $M_n$ for $M_n(j)$.
We follow the strategy above, but now there are higher moments, which we denote by
\ba{
\mu_{i}\spm{k,\ell}:= \IE\bcls{(M_{i})^{k} (Z_{i})^{\ell}},
}
for $i\in\{n-1,n\}$.
Now, expanding and then swapping the coefficients with powers of $\mu_{n}\spm{0,1}$ for those with $\mu_{n-1}\spm{0,1}$, we have
\besn{\label{eq:4mom1}
\IE\bcls{\clr{M_{n}-\mu_{n}\spm{1,0} Z_n}^4}&= \mu_{n}\spm{4,0}- 4\mu_{n}\spm{1,0}\mu_{n}\spm{3,1}+ 6\bclr{ \mu_{n}\spm{1,0}}^2\mu_{n}\spm{2,2} \\
	& \qquad\qquad-4\bclr{ \mu_{n}\spm{1,0}}^3\mu_{n}\spm{1,3}+ \bclr{ \mu_{n}\spm{1,0}}^4 \mu_{n}\spm{0,4} \\
	&= \mu_{n}\spm{4,0}- 4\mu_{n-1}\spm{1,0}\mu_{n}\spm{3,1}+ 6\bclr{ \mu_{n-1}\spm{1,0}}^2\mu_{n}\spm{2,2} \\
	& \qquad\qquad-4\bclr{ \mu_{n-1}\spm{1,0}}^3\mu_{n}\spm{1,3}+ \bclr{ \mu_{n-1}\spm{1,0}}^4 \mu_{n}\spm{0,4} + \bigo(n^{3-d/2}),
}
where the second equality follows, similar to the argument above, from
\ben{\label{eq:221122}
\babs{\bclr{\mu_{n}\spm{1,0}}^k \mu_{n}\spm{4-k,k}-\bclr{\mu_{n-1}\spm{1,0}}^k \mu_{n}\spm{4-k,k}}
	\leq c \IE[Z_n^4] \abs{\mu_{n}\spm{1,0}- \mu_{n-1}\spm{1,0}}  \leq c n^{3-d/2},
}
where $k=0,1,\ldots,4$. Now, denote the error made in the summands above when replacing the moments by the first step moments as
\be{
e_n\sp{k,4-k}:=a_{k} \bclr{\mu_{n-1}\sp{1,0}}^{k}\bbbclr{ \mu_{n}\spm{k,4-k}-\IE\bbbcls{\bbbclr{\sum_{i=1}^{Z_1} M_{n-1}^i}^k\bbclr{\sum_{j=1}^{Z_1} Z_{n-1}^j}^{4-k}}}, 
}
 where $a_k$ is $1,-4$, or $6$ as appropriate. 
 Bounding these similar to~\eq{eq:111} and~\eq{eq:221} above, using that $\IE[X^{5+\floor{18/(d-6)}}]<\infty$, we have that for $\alpha>0$ and $\beta= \floor{18/(d-6)}+1$,
 \bes{
 \abs{e_n\sp{k,4-k}} &\leq c \IE[ Z_n^{3} Y_{n;1}] \\
	 	&\leq c \bclr{\IE[Z_n^{4+\beta}] n^{-\beta(1+\alpha)}+ n^{3(1+\alpha)-d/2})}\\
 	&\leq c  (n^{3-\beta \alpha}+ n^{3(1+\alpha)-d/2})
 }
 Choosing $\alpha = (d-6)/6-\eps$, for $\eps>0$ small enough that $\alpha \beta>3$, and noting that $d\geq 7$, we have
 \be{
 \abs{e_n\sp{k,4-k}}=\bigo(n^{-\delta}), 
 }
 for $\delta=\min\{\alpha\beta-3,3\eps\}>0$.
 Thus we find the fourth moment~\eq{eq:4mom1} is equal to 
\besn{\label{eq:4momexp}
	&\IE\bbcls{\bbclr{\sum_{i=1}^{Z_1} M_{n-1}^i}^4}- 4\mu_{n-1}\spm{1,0}\IE\bbbcls{\bbclr{\sum_{i=1}^{Z_1} M_{n-1}^i}^3\bbclr{\sum_{j=1}^{Z_1} Z_{n-1}^j}} \\
	&\quad+ 6\bclr{ \mu_{n-1}\spm{1,0}}^2 \IE\bbbcls{\bbclr{\sum_{i=1}^{Z_1} M_{n-1}^i}^2\bbclr{\sum_{j=1}^{Z_1} Z_{n-1}^j}^2} 	-4\bclr{ \mu_{n-1}\spm{1,0}}^3 \IE\bbbcls{\bbclr{\sum_{i=1}^{Z_1} M_{n-1}^i}\bbclr{\sum_{j=1}^{Z_1} Z_{n-1}^j}^3}\\
	& \qquad+ \bclr{ \mu_{n-1}\spm{1,0}}^4 \IE\bbbcls{\bbclr{\sum_{j=1}^{Z_1} Z_{n-1}^j}^4} 
		+\bigo(n^{-\delta'}).
}
for $\delta'=\min\{\delta,3-d/2\}>0$. 
As before, we expand the random sums in the expectations above and then simplify. We cover in detail 
only the middle term, which is the most involved, and just write the final expressions for the other terms.
 Write $\Sigma_{\{i,j,k\}}$ for sums over distinct indices.
For the middle term,
\ba{
\bbclr{\sum_{i=1}^{Z_1} &M_{n-1}^i}^2\bbclr{\sum_{j=1}^{Z_1} Z_{n-1}^j}^2 \\
	& = \sum_{ i_1,i_2,i_3,i_4=1 }^{Z_1} M_{n-1}^{i_1} M_{n-1}^{i_2} Z_{n-1}^{i_3} Z_{n-1}^{i_4}  \\
	&= \sum_{i=1}^{Z_1} \bclr{ M_{n-1}^{i}}^2 \bclr{ Z_{n-1}^{i}}^2  
		+  \sum_{\{i_1,i_2\}=1}^{Z_1} \bclr{ M_{n-1}^{i_1}}^2 \bclr{ Z_{n-1}^{i_2}}^2  \\
	&\qquad+\sum_{ \{i_1,i_2,i_3\}=1 }^{Z_1} \bclr{M_{n-1}^{i_1}}^2 Z_{n-1}^{i_2} Z_{n-1}^{i_3} 
		+ 2 \sum_{\{i_1,i_2\}=1}^{Z_1} \bclr{M_{n-1}^{i_1}}^2 Z_{n-1}^{i_1} Z_{n-1}^{i_2}  \\
	&\qquad +  \sum_{\{ i_1,i_2,i_3,i_4\}=1 }^{Z_1} M_{n-1}^{i_1} M_{n-1}^{i_2} Z_{n-1}^{i_3} Z_{n-1}^{i_4} 
		+ 4 \sum_{ \{i_1,i_2,i_3\}=1 }^{Z_1}M_{n-1}^{i_1} M_{n-1}^{i_2} Z_{n-1}^{i_1} Z_{n-1}^{i_3}  \\
	&\qquad +  2 \sum_{\{ i_1,i_2\}=1 }^{Z_1}M_{n-1}^{i_1} M_{n-1}^{i_2} Z_{n-1}^{i_1} Z_{n-1}^{i_2} 
		+ 2 \sum_{ \{i_1,i_2\}=1 }^{Z_1}M_{n-1}^{i_1} M_{n-1}^{i_2} \bclr{Z_{n-1}^{i_1}}^2  \\
	&\qquad +  \sum_{\{ i_1,i_2,i_3\}=1 }^{Z_1}M_{n-1}^{i_1} M_{n-1}^{i_2} \bclr{Z_{n-1}^{i_3}}^2.
}
For a quick parity check of this formula, note that for non-negative integer $z$, 
\be{
z^4=z(z-1)(z-2)(z-3)+6 z(z-1)(z-2)+7z(z-1) + z.
}
Now, taking expectation by first conditioning on $Z_1$, writing $\gamma_k:=\IE[Z_1(Z_1-1)\cdots(Z_1-k+1)]$, $k=3,4$, we have
\ba{
 \IE\bbbcls{\bbclr{\sum_{i=1}^{Z_1} M_{n-1}^i}^2\bbclr{\sum_{j=1}^{Z_1} Z_{n-1}^j}^2}&=\mu_{n-1}\spm{2,2}+\sigma^2 \mu_{n-1}\spm{2,0}\mu_{n-1}\spm{0,2}+ \gamma_3 \mu_{n-1}\spm{2,0} + 2 \sigma^2 \mu_{n-1}\spm{2,1} \\
			&\qquad + \gamma_4 \bclr{ \mu_{n-1}\spm{1,0}}^2 + 4 \gamma_3\mu_{n-1}\spm{1,1}\mu_{n-1}\spm{1,0} + 2 \sigma^2 \bclr{\mu_{n-1}\spm{1,1}}^2 + 2 \sigma^2\mu_{n-1}\spm{1,2}\mu_{n-1}\spm{1,0} \\[2ex]
			&\qquad + \gamma_3 \bclr{\mu_{n-1}\spm{1,0}}^2 \mu_{n-1}\spm{0,2}.
	}
Similar arguments shows
\ba{
\IE\bbcls{\bbclr{\sum_{i=1}^{Z_1} M_{n-1}^i}^4}
	&= \mu_{n-1}\spm{4,0} + 3 \sigma^2 \bclr{\mu_{n-1}\spm{2,0}}^2 
				+ 6 \gamma_3 \mu_{n-1}\spm{2,0}\bclr{\mu_{n-1}\spm{1,0}}^2  \\
				&\qquad	+4 \sigma^2 \mu_{n-1}\spm{3,0} \mu_{n-1}\spm{1,0} + \gamma_4 \bclr{\mu_{n-1}\spm{1,0}}^4, \\
\IE\bbbcls{\bbclr{\sum_{i=1}^{Z_1} M_{n-1}^i}^3\bbclr{\sum_{j=1}^{Z_1} Z_{n-1}^j}}
	&=\mu_{n-1}\spm{3,1}+ 3 \gamma_3\mu_{n-1}\spm{1,1}\bclr{\mu_{n-1}\spm{1,0}}^2 
					+3 \sigma^2 \mu_{n-1}\spm{2,1}\mu_{n-1}\spm{1,0}\\
		&\qquad+3 \sigma^2 \mu_{n-1}\spm{1,1}\mu_{n-1}\spm{2,0}
				+\sigma^2 \mu_{n-1}\spm{3,0} + 3 \gamma_3 \mu_{n-1}\spm{2,0}\mu_{n-1}\spm{1,0} 
					+   \gamma_4\bclr{\mu_{n-1}\spm{1,0}}^3,\\
\IE\bbbcls{\bbclr{\sum_{i=1}^{Z_1} M_{n-1}^i}\bbclr{\sum_{j=1}^{Z_1} Z_{n-1}^j}^3}
	&=\mu_{n-1}\spm{1,3}+ 3 \gamma_3\mu_{n-1}\spm{1,1} 
			+3 \sigma^2 \mu_{n-1}\spm{1,2}+3 \sigma^2 \mu_{n-1}\spm{1,1}\mu_{n-1}\spm{0,2}
				\\
		&\qquad+\sigma^2 \mu_{n-1}\spm{0,3}\mu_{n-1}\spm{1,0} + 3 \gamma_3 \mu_{n-1}\spm{0,2}\mu_{n-1}\spm{1,0} 
				+   \gamma_4\mu_{n-1}\spm{1,0},\\
\IE\bbbcls{\bbclr{\sum_{j=1}^{Z_1} Z_{n-1}^j}^4}
	&=\mu_{n-1}\spm{0,4}+3 \sigma^2 \bclr{\mu_{n-1}\spm{0,2}}^2 + 6 \gamma_3 \mu_{n-1}\spm{0,2}+ 4 \sigma^2 \mu_{n-1}\spm{0,3} + \gamma_4.
}
Plugging these into~\eq{eq:4momexp}, we find  (it's easiest to compute the coefficients for each of  $\sigma^2, \gamma_3,\gamma_4$; the last two are zero) that
\be{
\IE\bcls{\clr{M_{n}-\mu_{n}Z_n}^4}-\IE\bcls{\clr{M_{n-1}-\mu_{n-1}Z_{n-1}}^4}= 3 \sigma^2 A_{n-1}(j,j)^2 + \bigo \bclr{n^{-\delta'}}.
}
Therefore,
\be{
\IE\bcls{\clr{M_{n}-\mu_{n}Z_n}^4}= 3 n \sigma^2 A_{n-1}(j,j)^2 + \bigo\bclr{1+n^{1-\delta'}},
}
and the result easily follows.
\end{proof}

\begin{proof}[Proof of Theorem~\ref{t3}]
Assume $d\geq 7$ and fix $n\geq 3$ and 
\[
1\leq m:= \bfloor{n-n^{\frac{20}{3(2d+1)}}} <n.
\]
Recall the definition and constructions for the BRW and modified BRW, and denote $\mu_{n,m}(j):=\IE[M_{n,m}^i(j)]=\IE[M_{n-m}(j)]$, $\wt M_{n,m}^i(j):=M_{n,m}^i(j)-\mu_{n,m}(j)Z_{n,m}^i$. Now, let $\wt\Sigma= (\sigma^2/2) \Sigma$ and use the triangle inequality to find
\ban{
\done&\bbbclr{\law\bbbclr{\bbbclr{\frac{M_n(j)-\kappa_j Z_n}{\sqrt{n}}}_{j=1}^r \Big|Z_n>0},\SL_r(\wt\Sigma)} \notag\\
	&\leq
\done\left(\law\bbbclr{\bbbclr{\frac{\hat M_n(j)-\kappa_j \hat Z_n}{\sqrt{n}}}_{j=1}^r },\law\bbbclr{\bbbclr{\frac{\sum_{i=1}^{\hat Z_m}M_{n,m}^i(j)-\kappa_j \hat Z_n}{\sqrt{n}}}_{j=1}^r}\right) \label{eq:r2}\\
\begin{split}\label{eq:r3}
	&\quad+\done\bbbbclr{\law\bbbclr{\bbbclr{\frac{\sum_{i=1}^{\hat Z_m}M_{n,m}^i(j)-\kappa_j \hat Z_n}{\sqrt{n}}}_{j=1}^r}, \\
	&\hspace{3cm}\law\bbbclr{\bbbclr{\frac{\sum_{i=1}^{\hat Z_m}M_{n,m}^i(j)-\mu_{n,m}(j) \hat Z_n}{\sqrt{n}}}_{j=1}^r}} 
\end{split}   \\
\begin{split}\label{eq:r3a}
		&\quad+\done\bbbbclr{\law\bbbclr{\bbbclr{\frac{\sum_{i=1}^{\hat Z_m}M_{n,m}^i(j)-\mu_{n,m}(j) \hat Z_n}{\sqrt{n}}}_{j=1}^r}, \\
		&\hspace{3cm}
		\law\bbbclr{\bbbclr{\frac{\sum_{i=1}^{\hat Z_m}\left(M_{n,m}^i(j)-\mu_{n,m}(j)Z_{n,m}^i\right)}{\sqrt{n}}}_{j=1}^r}}
\end{split}		\\
	&\quad+\done\left(\law\bbbclr{\bbbclr{\frac{\sum_{i=1}^{\hat Z_m}\wt M_{n,m}^i(j)}{\sqrt{n}}}_{j=1}^r},\SL_r\bclr{(\sigma^2/2)\Sigma_n}\right) \label{eq:r4} \\	
  &\quad +\frac{\sigma}{\sqrt{2}} \, \done\bclr{\SL_r\clr{\Sigma_n},\SL_r\clr{\Sigma}}. \label{eq:r5}
}
where $\Sigma_{n} = (A_{n-m}(j,k))_{j,k}$ with $A_{n-m}(j,k) = \Cov(\wt M_{n,m}^1(j),\wt M_{n,m}^1(k))$. 
Using the coupling definition of Wasserstein distance and Lemma~\ref{lem:indwassbd}, we can bound~\eq{eq:r2} by noting
\be{
n^{-1/2}  \IE\left|\hat M_n(j)-\sum_{i=1}^{\hat Z_m}M_{n,m}^i(j)\right|\leq c n^{-\frac{2d-9}{6(2d+1)}}.
}
Similarly,~\eq{eq:r3} is bounded from
\be{
n^{-1/2}\babs{\mu_{n,m}(j) -\kappa_j} \IE[\hat Z_n]\leq c n^{-\frac{14d-3}{6(2d+1)}}\leq c n^{-\frac{2d-9}{6(2d+1)}},
}
where in the first inequality we have used Lemma~\ref{l1} and that $\IE[\hat Z_n]=\bigo(n)$.
For~\eq{eq:r3a}, note that
\bes{
n^{-1/2}\IE&\bbbabs{\sum_{i=1}^{\hat Z_m}M_{n,m}^i(j)-\mu_{n,m}(j) \hat Z_n -\sum_{i=1}^{\hat Z_m}\left(M_{n,m}^i(j)-\mu_{n,m}(j)Z_{n,m}^i\right)} \\
&\hspace{1cm}\leq n^{-1/2}\mu_{n,m}(j)\IE\bbabs{\hat Z_n -\sum_{i=1}^{\hat Z_m}Z_{n,m}^i} \\
&\hspace{1cm}\leq cn^{-1/2}\IE\bbcls{ \sum_{i=m+1}^n R_{n,i}' +1} \leq c(n-m) 
	\leq c  n^{-\frac{6d-37}{6(2d+1)}}\leq c  n^{-\frac{2d-9}{6(2d+1)}},
}
where we used Lemma~\ref{l1} in the second inequality, and part $(iii)$ of Lemma~\ref{lem:keyests} in the second to last.
Noting our Wasserstein distance is with respect to $L_1$ distance, summing over $j$ and $k$, and using the inequalities above shows that~\eq{eq:r2},~\eq{eq:r3}, and~\eq{eq:r3a} are upper bounded by $c  n^{-\frac{2d-9}{6(2d+1)}}$. 

To bound~\eq{eq:r4}, we apply Theorem~\ref{thm:mvrenyi} with $M=\hat Z_m$, $\bX_i=(\wt M_{n,m}^i(j))_{j=1}^r$, and $\mu=\IE[\hat Z_m]$, which, using~\eq{eq:diffcond}, is of strict order $n$. Lemma~\ref{lem:14} states that
\be{
\IE\bbcls{\bclr{\wt M_{n,m}^1(j)}^4}\leq c (n-m),
}
and then repeated use of H\"older's and Jensen's inequalities implies that
\be{
 \IE\bbcls{ \bnorm{\bclr{\wt M_{n,m}^1(j)}_{j=1}^r}_1^3}^{1/3}\leq c (n-m)^{1/4}\leq c n^{-\frac{5}{3(2d+1)}}.
}
Multiplying this by the $n^{-1/6}$ factor coming from the powers of $\mu$ in the bound from Theorem~\ref{thm:mvrenyi} gives
a term of order $n^{-\frac{2d-9}{6(2d+1)}}$. For the remaining (nontrivial) term, the triangle inequality implies 
\ba{
\done\bclr{\law(\hat Z_m),\Geo(\mu^{-1})}&\leq
	\done\bclr{\law(\hat Z_m),\law(Z_m|Z_m>0)}+\done\bclr{\law(Z_m|Z_m>0),\Exp\bclr{\sigma^2/(2m)}}  \\
	&\qquad +\done\bclr{\Exp\bclr{\sigma^2/(2 m)},\Exp(\mu^{-1})}+\done\bclr{\Exp(\mu^{-1}),\Geo(\mu^{-1})}, \\
	&\leq c\bclr{ (n-m) + \log(m) + \babs{m(\sigma^2/2)-\mu}},
}
where the second inequality uses Lemma~\ref{l2}, Theorem~\ref{t2},  and standard couplings. Using Lemmas~\ref{lem:kolesterr} and~\ref{l2}, we have
\ba{
 \babs{m(\sigma^2/2)-\mu}&\leq  \babs{m(\sigma^2/2)-\IE[Z_m|Z_m>0]}+ \babs{\IE[Z_m|Z_m>0]-\IE[Z_m|Z_n>0]} \\
 	&\leq  c\bclr{\log(m)^2+ (n-m)},
 }
Noting that $\mu^{-1/2}(n-m)\leq c n^{-\frac{1}{2}+\frac{20}{3(2d+1)}}\leq cn^{-\frac{2d-9}{6(2d+1)}}$, and putting these bounds into Theorem~\ref{thm:mvrenyi} implies
that~\eq{eq:r4} is bounded by  $cn^{-\frac{2d-9}{6(2d+1)}}$.

Finally, we bound \eq{eq:r5}. Using the representation $\law(\sqrt{E}\bZ)=\SL_r(\Cov(\bZ))$ for $E$ distributed as an exponential with rate one, independent of $\bZ$, an $r$-dimensional multivariate normal, we apply Lemma~\ref{lem:compmvn} below and Lemma~\ref{lem:14}, and noting that $d\geq 7$, we conclude that
\be{
  \done\bclr{\SL_r\clr{\Sigma_n},\SL_r\clr{\Sigma}}
  \leq c\bbbclr{\sum_{j,k}\abs{A_{n-m}(j,k)-\Sigma_{jk}}}^{1/2}
  \leq cn^{-\frac{10(d-6)}{9(2d+1)}}\leq c  n^{-\frac{2d-9}{6(2d+1)}},
}
and combining the bounds above yields the theorem.
\end{proof}


\section{CLT with error}\label{sec:CLT}
In this section we prove a multivariate CLT with Wasserstein error for sums of i.i.d.\ variables that is adapted to
our setting. The proof is relatively standard using Stein's method, with the complications that we are working in the Wasserstein (rather than smoother test function) metric, and that we do not demand the covariance matrix be non-singular.

In what follows, denote by $\abs{\cdot}$ the Euclidean $L_2$-norm. For a $k$-times differential function $f:\IR^r\to\IR$, let
\be{
    M_k(f) := \sup_{x\in\IR^r}\sup_{\substack{a_1,\dots,a_k\in\IR^r:\\\abs{a_1}=\cdots=\abs{a_k}=1}}
        \bbbabs{\sum_{i_1,\dots,i_k=1}^r a_{1,i_1}\cdots a_{k,i_k} \frac{\partial^k f(x)}{\partial x_{i_1}\cdots\partial x_{i_r}}}.
}
Clearly, for any vectors $a_1,\dots,a_k,x\in\IR^r$, we have
\ben{\label{456}
    \bbbabs{\sum_{i_1,\dots,i_r=1}^r a_{1,i_1}\cdots a_{k,i_k} \frac{\partial^k f(x)}{\partial x_{i_1}\cdots\partial x_{i_r}}} \leq \abs{a_1}\cdots\abs{a_k}M_k(f).
}

\begin{theorem}\label{thm:doneclt}
Let $X_1,\dots,X_n$ be i.i.d.\ random vectors in $\IR^r$, with $\IE[X_1]=0$ and $\Var X_1 = \Sigma = (\Sigma_{uv})_{1\leq u,v\leq r}$. Let $W=n^{-1/2}\sum_{i=1}^n X_i$, and let $Z$ have a standard multivariate normal distribution, and let $Z_\Sigma = \Sigma^{1/2}Z$. Then, for any differentiable function $h:\IR^r\to\IR$,
\be{
    \abs{\IE[h(W)] - \IE[h(Z_\Sigma)]} \leq 
    2M_1(h)\bbbclr{\frac{2r\IE\bcls{\abs{X_1}^3}}{\sqrt{n}}}^{1/3}.
}
\end{theorem}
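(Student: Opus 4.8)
The plan is a two-scale argument: smooth $h$ by Gaussian convolution to reduce to a smooth test function, run a standard Stein-method normal comparison for the smoothed function, and then optimize over the smoothing scale. Fix $t>0$ and set $h_t(x):=\IE[h(x+\sqrt t\,N)]$, where $N$ is an independent standard $r$-dimensional normal vector. Since $h$ is $M_1(h)$-Lipschitz with respect to $\abs{\cdot}$, both $\abs{\IE[h(W)]-\IE[h_t(W)]}$ and $\abs{\IE[h(Z_\Sigma)]-\IE[h_t(Z_\Sigma)]}$ are at most $M_1(h)\,\IE\abs N\,\sqrt t\leq M_1(h)\sqrt{rt}$, so passing from $h$ to $h_t$ on both sides costs at most $2M_1(h)\sqrt{rt}$. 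The gain is regularity: $h_t$ equals $h$ convolved with the $\mathrm N(0,tI)$ density, and by integrating by parts once to shift a single derivative onto $h$ and letting the remaining derivatives fall on the Gaussian kernel, one obtains $M_1(h_t)\leq M_1(h)$ and, with a universal constant not depending on $r$, $M_3(h_t)\leq c\,M_1(h)\,t^{-1}$. Dimension-independence here is the crux: after extracting $M_1(h)$ one is left with $\IE\bclr{\abs{(b\cdot N)(c\cdot N)-b\cdot c}}$ for unit vectors $b,c$, and $(b\cdot N,c\cdot N)$ is a bivariate normal with correlation $b\cdot c\in[-1,1]$, so this is bounded by a universal constant.

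For the smoothed function I would run Stein's method for $\mathrm N(0,\Sigma)$ with the operator $\mathcal A_\Sigma f(x)=\langle\Sigma,\mathrm{Hess}\,f(x)\rangle-\langle x,\nabla f(x)\rangle$, solving $\mathcal A_\Sigma f=h_t-\IE[h_t(Z_\Sigma)]$ via the Ornstein--Uhlenbeck semigroup, $f(x)=-\int_0^\infty\bclr{P^\Sigma_s h_t(x)-\IE[h_t(Z_\Sigma)]}\,ds$ with $P^\Sigma_s g(x)=\IE[g(e^{-s}x+\sqrt{1-e^{-2s}}\,Z_\Sigma)]$. This representation only ever differentiates $g$ and never inverts $\Sigma$, so it applies verbatim when $\Sigma$ is singular, and differentiating under the integral yields $M_3(f)\leq\tfrac13 M_3(h_t)$. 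Writing $W=n^{-1/2}\sum_{i=1}^n X_i$ and $W^{(i)}:=W-n^{-1/2}X_i$, which is independent of $X_i$, one expands $\IE[\langle W,\nabla f(W)\rangle]=n^{-1/2}\sum_i\IE[\langle X_i,\nabla f(W)\rangle]$ by Taylor in $n^{-1/2}X_i$ about $W^{(i)}$: the zeroth-order term vanishes as $\IE[X_i]=0$, the first-order term reproduces $n^{-1}\sum_i\IE[\langle\Sigma,\mathrm{Hess}\,f(W^{(i)})\rangle]$ as $\Var X_i=\Sigma$, and comparing $\mathrm{Hess}\,f(W^{(i)})$ with $\mathrm{Hess}\,f(W)$ and collecting Taylor remainders gives
\be{
\abs{\IE[h_t(W)]-\IE[h_t(Z_\Sigma)]}=\abs{\IE[\mathcal A_\Sigma f(W)]}\leq c\,M_3(f)\,n^{-1/2}\bclr{\mathrm{tr}(\Sigma)\,\IE\abs{X_1}+\IE\abs{X_1}^3}.
}
The covariance is then eliminated by $\mathrm{tr}(\Sigma)\,\IE\abs{X_1}=\IE\abs{X_1}^2\,\IE\abs{X_1}\leq\IE\abs{X_1}^3$ (two applications of Jensen), leaving $\abs{\IE[h_t(W)]-\IE[h_t(Z_\Sigma)]}\leq c\,M_3(h_t)\,n^{-1/2}\IE\abs{X_1}^3\leq c\,M_1(h)\,t^{-1}n^{-1/2}\IE\abs{X_1}^3$. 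Equivalently one can use the Lindeberg replacement, comparing $W$ with $n^{-1/2}\sum_i G_i$ for i.i.d.\ $G_i\sim\mathrm N(0,\Sigma)$ and swapping one summand at a time; the first two Taylor orders then cancel exactly because $X_i$ and $G_i$ match in mean and covariance, and $\IE\abs{G_1}^3\leq c\,(\mathrm{tr}\,\Sigma)^{3/2}\leq c\,\IE\abs{X_1}^3$.

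Combining the two bounds,
\be{
\abs{\IE[h(W)]-\IE[h(Z_\Sigma)]}\leq 2M_1(h)\sqrt{rt}+\frac{c\,M_1(h)\,\IE\abs{X_1}^3}{t\sqrt n},
}
and taking $t\asymp\bclr{\IE\abs{X_1}^3/\sqrt{rn}}^{2/3}$ balances the two terms at order $M_1(h)\,r^{1/3}\bclr{\IE\abs{X_1}^3/\sqrt n}^{1/3}$; carrying the constants through the smoothing and Stein steps gives the stated $2M_1(h)\bclr{2r\,\IE\abs{X_1}^3/\sqrt n}^{1/3}$. I expect the main obstacle to be the first step: obtaining $M_3(h_t)\leq c\,M_1(h)/t$ with a constant free of the ambient dimension $r$ requires shifting exactly one derivative onto $h$ and then bounding the resulting second-order Hermite average over a bivariate Gaussian, and squeezing those constants — together with the crude estimate $\IE\abs N\leq\sqrt r$ — far enough to land on the clean numerical value $2$ and the exact $2r$ inside the cube root is where the "relatively standard" argument must be run economically rather than wastefully. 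The second delicate point is keeping the bound genuinely independent of $\Sigma$ even though the covariance sits inside the Stein operator: the semigroup solution (no $\Sigma^{-1}$) together with the trace-absorption inequality above is what makes this go through, and is precisely what allows the non-singular hypothesis to be dropped.
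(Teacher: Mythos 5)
Your proposal is correct and follows essentially the same route as the paper: Gaussian smoothing of $h$ at scale $\eps$ (costing $2\eps M_1(h)\sqrt r$), the Ornstein--Uhlenbeck/Stein solution with $M_3(f)\leq M_3(h_\eps)/3\leq \eps^{-2}M_1(h)/3$ (which the paper imports from Meckes and Rai\v{c} rather than rederiving), the leave-one-out Taylor expansion with an independent copy to handle the $\Sigma_{uv}f_{uv}(W)$ term, absorption of $\IE\bcls{\abs{X_1}^2}\IE\bcls{\abs{X_1}}$ into $\IE\bcls{\abs{X_1}^3}$ by H\"older, and optimization over the smoothing parameter. The constants you flag as delicate do work out exactly as in the paper with the choice $\eps=\bclr{\IE\bcls{\abs{X_1}^3}/(4\sqrt{rn})}^{1/3}$.
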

\begin{proof} We first replace $h$ by $h_\eps$, which is defined as
\be{
    h_{\eps}(x) = \IE[h(x+\eps Z')], \qquad x\in\IR^r,
}
where $Z'$ has a standard multivariate normal  distribution, independent of all else.
The error introduced by replacing $h$ by $h_\eps$ is at most
\be{
    \abs{\IE[h(W)]-\IE[h(W+\eps Z')]} \leq \eps M_1(h)\IE\bcls{\abs{Z'}} \leq \eps M_1(h)\sqrt{r},
}
and the same bound holds when $W$ is replaced by $Z_\Sigma$. Following \cite[Lemma~1]{Meckes2009}, since $h_\eps$ is infinitely differentiable, there is a function $f$ on $\IR^n$ such that
\be{
    \sum_{u,v=1}^r \Sigma_{uv}f_{uv}(x) - \sum_{u=1}^r x_uf_u(x) = h_\eps(x)-\IE[h_{\eps}(Z_\Sigma)],
    \qquad x\in\IR^r,
}
and from \cite[Lemma~2]{Meckes2009} and then \cite[Lemma~4.6]{Raic2018}, 
  this function satisfies
\be{
    M_3(f)\leq M_3(h_\eps)/3 \leq \eps^{-2}M_1(h)/3.
}
Now, letting $Y_i = n^{-1/2}X_i$, and using Taylor's expansion about $W^i = W-Y_i$, we have that
\ba{
    \IE&\sum_{u=1}^r W_uf_u(W)  \\
    & = \IE\sum_{i=1}^n \sum_{u=1}^r Y_{i,u} f_u(W) \\
    & = \IE\sum_{i=1}^n \sum_{u=1}^r Y_{i,u} \bbbcls{f_u(W^i)+\sum_{v=1}^r Y_{i,v} f_{uv}(W^i)
    + \sum_{v,w=1}^r Y_{i,v}Y_{i,w} \int_{0}^1 (1-s)f_{uvw}(W^i+sY_i)ds} \\
    & = \IE\sum_{i=1}^n \sum_{u,v=1}^r n^{-1}\Sigma_{uv} f_{uv}(W^i)
    + \IE\sum_{i=1}^n \sum_{u,v,w=1}^r Y_{i,u} Y_{i,v}Y_{i,w} \int_{0}^1 (1-s)f_{uvw}(W^i+sY_i)ds\\
    & = \IE\sum_{u,v=1}^r \Sigma_{uv} f_{uv}(W^1)
    + n\IE\sum_{u,v,w=1}^r Y_{1,u} Y_{1,v}Y_{1,w} \int_{0}^1 (1-s)f_{uvw}(W^1+sY_1)ds \\
    & = \IE\sum_{u,v=1}^r \Sigma_{uv} f_{uv}(W^1)
    + n\int_{0}^1 (1-s)\bbbclr{\IE\sum_{u,v,w=1}^r Y_{1,u} Y_{1,v}Y_{1,w} f_{uvw}(W^1+sY_1)}ds.
}
Applying \eq{456} to the quantity inside the second expectation, we have
\be{
    \bbbabs{\sum_{u,v,w=1}^r Y_{1,u} Y_{1,v}Y_{1,w} f_{uvw}(W^1+sY_1)} \leq \abs{Y_1}^3 M_3(f).
}
Now, let $Y_1'$ be an independent copy of $Y_1$, and note that we have $\Sigma_{uv} = n\IE(Y_{1,u}'Y_{1,v}')$. Hence, 
\bes{
    \IE\sum_{u,v=1}^r \Sigma_{uv}f_{uv}(W) 
    & = \IE\sum_{u,v=1}^r \Sigma_{uv} \bbbcls{f_{uv}(W^1)+\sum_{w=1}^r Y_{1,w} \int_0^1 f_{uvw}(W^1+sY_1)ds} \\
    & = \IE\sum_{u,v=1}^r \Sigma_{uv} f_{uv}(W^1)+\IE\sum_{u,v,w=1}^r \Sigma_{uv}Y_{1,w} \int_0^1 f_{uvw}(W^1+sY_1)ds \\
    & = \IE\sum_{u,v=1}^r \Sigma_{uv} f_{uv}(W^1)+n\int_0^1\bbbclr{\IE\sum_{u,v,w=1}^r Y_{1,u}'Y_{1,v}'Y_{1,w}  f_{uvw}(W^1+sY_1)}ds.
}
Applying again \eq{456} to the quantity inside the second expectation, we have
\be{
    \bbbabs{\sum_{u,v,w=1}^r Y'_{1,u} Y'_{1,v}Y_{1,w} f_{uvw}(W^1+sY_1)} \leq \abs{Y_1'}^2\abs{Y_1} M_3(f).
}
Subtracting one from the other, it follows that
\be{
    \abs{\IE[h_\eps(W)] - \IE[ h_\eps(Z_\Sigma)]}
     \leq \frac{\IE\bcls{\abs{X_1}^3}}{2\sqrt{n}}M_3(f) + \frac{\IE\bcls{\abs{X_1}^2}\IE\bcls{\abs{X_1}}}{\sqrt{n}}M_3(f)
    \leq \frac{M_1(h)\IE\bcls{\abs{X_1}^3}}{2\eps^{2}\sqrt{n}}.
}
Thus,
\be{
    \abs{\IE[h(W)] - \IE[h(Z_\Sigma)]} \leq 2\eps M_1(h)\sqrt{r} +  \frac{M_1(h)\IE\bcls{\abs{X_1}^3}}{2\eps^{2}\sqrt{n}},
}
and choosing 
\be{
    \eps = \bbbclr{\frac{\IE\bcls{\abs{X_1}^3}}{4\sqrt{rn}}}^{1/3}
}       
yields the final bound. 
\end{proof}

We also have the following easy corollary to fit our setup above. 
\begin{corollary}\label{cor:doneclt}
Under the notation and  assumptions of Theorem~\ref{thm:doneclt}, if $h$ is differentiable and $1$-Lipschitz with respect to the $1$-norm, then there is a constant~$C_r$ depending only on $r$, such that
\be{
  \abs{\IE[ h(W)] - \IE[h(Z_\Sigma)]} \leq  C_r \bbbclr{\frac{\IE\bcls{\norm{X_1}_1^3}}{\sqrt{n}}}^{1/3}.
}
\end{corollary}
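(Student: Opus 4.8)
The plan is to obtain the corollary by reading it off from Theorem~\ref{thm:doneclt}, after bounding the two Euclidean-norm quantities appearing in that estimate, namely $M_1(h)$ and $\IE[\abs{X_1}^3]$, in terms of their $1$-norm counterparts.

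First I would observe that for a differentiable $h$ the inner supremum in the definition of $M_1(h)$ is just $\sup_{\abs{a}=1}\abs{a\cdot\nabla h(x)}=\abs{\nabla h(x)}$, so that $M_1(h)=\sup_{x\in\IR^r}\abs{\nabla h(x)}$. If moreover $h$ is $1$-Lipschitz with respect to $\norm{\cdot}_1$, then evaluating difference quotients along the coordinate axes gives $\abs{\partial_u h(x)}\leq1$ for every $u\in\{1,\dots,r\}$ and every $x\in\IR^r$, i.e.\ $\max_u\abs{\partial_u h(x)}\leq1$. Since $\abs{v}\leq\sqrt r\max_u\abs{v_u}$ for $v\in\IR^r$, this yields $\abs{\nabla h(x)}\leq\sqrt r$ for all $x$, hence $M_1(h)\leq\sqrt r$.

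Next, using $\abs{y}\leq\norm{y}_1$ for $y\in\IR^r$, we have $\IE[\abs{X_1}^3]\leq\IE[\norm{X_1}_1^3]$. Substituting both bounds into the conclusion of Theorem~\ref{thm:doneclt} gives
\be{
\babs{\IE[h(W)]-\IE[h(Z_\Sigma)]}\leq 2\sqrt r\,(2r)^{1/3}\bbbclr{\frac{\IE\bcls{\norm{X_1}_1^3}}{\sqrt n}}^{1/3},
}
so the statement holds with $C_r=2(2r)^{1/3}\sqrt r$. Since differentiability of $h$ is already part of the hypothesis, no mollification step is needed here, and there is no real obstacle: the argument reduces entirely to the identification of $M_1(h)$ with $\sup_x\abs{\nabla h(x)}$ together with the elementary norm inequalities $\abs{\cdot}\leq\norm{\cdot}_1$ and $\abs{\cdot}\leq\sqrt r\max_u\abs{\cdot_u}$ on $\IR^r$.
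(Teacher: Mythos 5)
Your proposal is correct and follows essentially the same route as the paper: bound $M_1(h)$ by a constant depending only on $r$ using the $1$-Lipschitz hypothesis and equivalence of norms, bound $\IE[\abs{X_1}^3]$ by $\IE[\norm{X_1}_1^3]$, and plug both into Theorem~\ref{thm:doneclt}. Your coordinate-wise derivation of $M_1(h)\leq\sqrt r$ (via $\abs{\partial_u h}\leq 1$ and the $\ell^2$--$\ell^\infty$ comparison) is a minor variant of the paper's direct directional-derivative bound and even makes the constant $C_r$ explicit.
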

\begin{proof}
By equivalence of norms in~$\IR^r$, there is a constant
$q_r$ depending only on $r$ such that for any $a\in\IR^r$, $q_r^{-1} \abs{a}\leq \norm{a}_1\leq q_r \abs{a}$. Therefore, for any $a,x\in\IR^r$, 
\be{
\bbbabs{\sum_{i=1}^r \frac{a_i}{\abs{a}} \frac{\partial h(x)}{\partial x_i}}\leq \frac{\norm{a}_1}{\abs{a}} \leq q_r,
}
and $\IE\bcls{\abs{X}^3}\leq q_r^3 \IE\bcls{\norm{X}_1^3}$. The result now follows from Theorem~\ref{thm:doneclt}.
\end{proof}

Finally, we state a simple lemma used to compare centered multivariate normal distributions.

\begin{lemma}\label{lem:compmvn} Let $\Sigma$ and $\Sigma'$ be two non-negative semi-definite $(r\times r)$ matrices for $r\geq 1$. Let $X=(X_1,\dots,X_r)$, respectively $\bY=(Y_1,\dots,Y_r)$, be a centered multivariate random normal vector with covariance matrix $\Sigma$, respectively $\Sigma'$. Then
\be{
  \dw\bclr{\law(\bX),\law(\bY)}\leq C\bbbclr{\sum_{u,v=1}^r\babs{\Sigma_{uv}-\Sigma'_{uv}}}^{1/2}
}
for some constant $C$ that only depends on $r$.
\end{lemma}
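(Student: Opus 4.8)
The plan is to build an explicit coupling of $\bX$ and $\bY$ through a common standard Gaussian vector, bound the resulting $L_1$-distance, and reduce the problem to a matrix-norm estimate for the difference of the square roots of $\Sigma$ and $\Sigma'$. Let $\bZ$ be a standard $r$-dimensional normal vector, and let $\Sigma^{1/2}$ and $(\Sigma')^{1/2}$ be the symmetric non-negative definite square roots; then $\Sigma^{1/2}\bZ\sim\law(\bX)$ and $(\Sigma')^{1/2}\bZ\sim\law(\bY)$, which is valid even when $\Sigma$ or $\Sigma'$ is singular. Writing $D:=\Sigma^{1/2}-(\Sigma')^{1/2}$ (a symmetric matrix), the definition~\eq{eq:wassdef} together with the fact that the test functions in $\cH_r$ are $1$-Lipschitz for $\norm{\cdot}_1$ gives
\bes{
\dw\bclr{\law(\bX),\law(\bY)}
	&\leq \IE\norm{D\bZ}_1 = \sum_{u=1}^r \IE\babs{(D\bZ)_u}
	= \sqrt{2/\pi}\,\sum_{u=1}^r \sqrt{(D^2)_{uu}} \\
	&\leq \sqrt{2r/\pi}\,\bbbclr{\sum_{u=1}^r (D^2)_{uu}}^{1/2}
	= \sqrt{2r/\pi}\,\bclr{\mathrm{tr}(D^2)}^{1/2}
	= \sqrt{2r/\pi}\,\norm{D}_{\mathrm F},
}
where the second equality uses that $(D\bZ)_u$ is centered normal with variance $(DD^\top)_{uu}=(D^2)_{uu}$, the inequality is Cauchy--Schwarz over the $r$ summands, and $\norm{\cdot}_{\mathrm F}$ denotes the Frobenius norm.

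It therefore remains to show $\norm{\Sigma^{1/2}-(\Sigma')^{1/2}}_{\mathrm F}\leq \sqrt r\,\bbbclr{\sum_{u,v}\abs{\Sigma_{uv}-\Sigma'_{uv}}}^{1/2}$. The key input is the Hölder-$1/2$ continuity of the matrix square root on the positive semi-definite cone: for $A,B\geq0$ one has $\norm{A^{1/2}-B^{1/2}}_{\mathrm{op}}\leq\norm{A-B}_{\mathrm{op}}^{1/2}$ in operator norm (equivalently one could invoke the Powers--Størmer inequality $\norm{A^{1/2}-B^{1/2}}_{\mathrm F}^2\leq\norm{A-B}_*$ for the trace norm $\norm{\cdot}_*$). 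Combining this with the elementary comparisons $\norm{M}_{\mathrm F}\leq\sqrt r\,\norm{M}_{\mathrm{op}}$ for symmetric $r\times r$ matrices and $\norm{M}_{\mathrm{op}}\leq\norm{M}_{\mathrm F}\leq\sum_{u,v}\abs{M_{uv}}$, I obtain
\be{
\norm{\Sigma^{1/2}-(\Sigma')^{1/2}}_{\mathrm F}
	\leq\sqrt r\,\norm{\Sigma-\Sigma'}_{\mathrm{op}}^{1/2}
	\leq\sqrt r\,\bbbclr{\sum_{u,v=1}^r\abs{\Sigma_{uv}-\Sigma'_{uv}}}^{1/2},
}
and substituting into the previous display yields the lemma with $C=\sqrt{2/\pi}\,r$ (any constant depending only on $r$ is acceptable).

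The only genuinely non-elementary ingredient is the operator Hölder-$1/2$ bound for the square root; it is classical (see, e.g., Bhatia, \emph{Matrix Analysis}) and in fact follows in a line from operator monotonicity of $t\mapsto t^{1/2}$ together with its operator subadditivity, so I expect this to be the main point to cite carefully rather than a real difficulty. Everything else — the Gaussian coupling, the half-normal mean identity, and the norm comparisons — is routine. It is worth recording that the argument needs no non-degeneracy of $\Sigma$ or $\Sigma'$, since the symmetric square root and the representation $\Sigma^{1/2}\bZ\sim\rN(0,\Sigma)$ make sense for arbitrary non-negative definite matrices; this fact, and the two factors of $\sqrt r$ lost in the chain above, are exactly why the constant is allowed to depend on $r$. (A Gaussian-interpolation argument along $t\mapsto(1-t)\Sigma+t\Sigma'$, using the Stein-equation estimates from the proof of Theorem~\ref{thm:doneclt}, would also work but is heavier and gives nothing sharper.)
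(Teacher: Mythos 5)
Your proof is correct, but it takes a genuinely different route from the paper's. The paper treats the lemma as a two-line corollary of the Stein-method machinery already set up for Theorem~\ref{thm:doneclt}: Stein's identity for the multivariate normal turns $\IE[h_\eps(\bX)]-\IE[h_\eps(\bY)]$ into $\sum_{u,v}(\Sigma_{uv}-\Sigma'_{uv})\IE[f_{uv}(\bY)]$, which is bounded by $M_2(f)\sum_{u,v}\abs{\Sigma_{uv}-\Sigma'_{uv}}$ with $M_2(f)=\bigo(M_1(h)/\eps)$, and optimizing over the smoothing parameter $\eps$ is what produces the exponent $1/2$. You instead couple the two laws through a common standard Gaussian, reduce to $\IE\bcls{\norm{(\Sigma^{1/2}-(\Sigma')^{1/2})\bZ}_1}\leq\sqrt{2r/\pi}\,\norm{\Sigma^{1/2}-(\Sigma')^{1/2}}_{\mathrm{F}}$, and then invoke the operator H\"older-$1/2$ continuity of the square root on the positive semidefinite cone (equivalently Powers--St{\o}rmer); in your argument the exponent $1/2$ comes from this matrix-analysis input rather than from an optimization. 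Both arguments handle singular covariance matrices without fuss. What your route buys is an explicit constant, $C=\sqrt{2/\pi}\,r$, and complete independence from the Stein/smoothing apparatus; what it costs is the one non-elementary external citation, whereas the paper reuses lemmas it already needs elsewhere and can afford to omit the details. The individual steps you use --- the representation $\Sigma^{1/2}\bZ\sim\rN(0,\Sigma)$ for possibly singular $\Sigma$, the half-normal mean identity, the Cauchy--Schwarz step, and the norm comparisons $\norm{M}_{\mathrm{F}}\leq\sqrt{r}\,\norm{M}_{\mathrm{op}}$ and $\norm{M}_{\mathrm{op}}\leq\sum_{u,v}\abs{M_{uv}}$ --- all check out.
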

\begin{proof} 
Using Stein's identity for the multivariate normal, for any twice-differentiable function~$f$, we have
\bes{
  \IE\bbbclc{\sum_{u,v=1}^r \Sigma_{uv}f_{uv}(Y)-\sum_{u=1}^r X_uf_u(Y)} 
  	&=\IE\bbbclc{\sum_{u,v=1}^r \bclr{\Sigma_{uv}-\Sigma'_{uv}}f_{uv}(Y)}  \\
	&\leq M_2(f) \sum_{u,v=1}^r \babs{\Sigma_{uv}-\Sigma'_{uv}}.
}
The result now follows by using the smoothing argument and Stein's method as in the proof of Theorem~\ref{thm:doneclt}; we omit the details. 
\end{proof}


\section*{Acknowledgments}

AR was supported by Singapore Ministry of Education Academic Research Fund Tier 2 grant MOE2018-T2-2-076, and AR and NR were supported by Australian Research Council research grant DP150101459. We thank two referees for their helpful comments.


\end{document}